\algnewcommand{\IIf}[1]{\State\algorithmicif\ #1\ \algorithmicthen}
\algnewcommand{\EndIIf}{\unskip\ \algorithmicend\ \algorithmicif}
\theoremstyle{remark} 
\newtheorem{remark}{Remark}
\theoremstyle{plain} 
\newtheorem{lemma}{Lemma}
\theoremstyle{plain}
\theoremstyle{definition}
\newtheorem{definition}{Definition}
\journal{Journal of Computational Physics}
\begin{document}

\begin{frontmatter}

\title{Fully-Discretely Nonlinearly-Stable Flux Reconstruction Methods for Compressible Flows}

\author[mcgill]{Carolyn M. V. Pethrick}
\ead{carolyn.pethrick@mail.mcgill.ca}

\author[mcgill]{Siva Nadarajah}
\ead{siva.nadarajah@mcgill.ca}

\affiliation[mcgill]{organization={Department of Mechanical Engineering, McGill University},
            addressline={845 Sherbrooke St W}, 
            city={Montreal},
            postcode={H3A 0G4}, 
            state={Quebec},
            country={Canada}}

\begin{abstract}
    A fully-discrete, nonlinearly-stable flux reconstruction (FD-NSFR) scheme is developed, which ensures robustness through entropy stability in both space and time for high-order flux reconstruction schemes.
    We extend the entropy-stable flux reconstruction semidiscretization of {Cicchino et al.~\cite{cicchino2022nonlinearly,cicchino2022provably,cicchino2023discretely}} with the relaxation Runge Kutta method to construct the FD-NSFR scheme.
    We focus our study on entropy-stable flux reconstruction methods, which allow a larger time step size than discontinuous Galerkin.
    In this work, we develop an FD-NSFR scheme that prevents temporal numerical entropy change in the broken Sobolev norm if the governing equations admit a convex entropy function that can be expressed in inner-product form.
    For governing equations with a general convex numerical entropy function, temporal entropy change in the physical $L_2$ norm is prevented. 
    As a result, for general convex numerical entropy, the FD-NSFR scheme achieves fully-discrete entropy stability only when the DG correction function is employed. 
    We use entropy-conserving and entropy-stable test cases for the Burgers', Euler, and Navier-Stokes equations to demonstrate that the FD-NSFR scheme prevents temporal numerical entropy change. 
    The FD-NSFR scheme therefore allows for a larger time step size while maintaining the robustness offered by entropy-stable schemes.
    We find that the FD-NSFR scheme is able to recover both integrated quantities and solution contours at a higher target time-step size than the semi-discretely entropy-stable scheme, suggesting a robustness advantage for low-Mach turbulence simulations.
\end{abstract}

\begin{keyword}
High-order \sep entropy stability \sep flux reconstruction \sep Runge-Kutta methods \sep Compressible Euler and Navier-Stokes equations \sep fully discrete entropy stability
\end{keyword}

\end{frontmatter}

\section{Introduction}
\label{introduction}

High-order (HO) methods have been developed to solve PDEs with high resolution, yielding more accurate results {\color{black}than low-order methods} at an equivalent number of degrees of freedom \cite{wang2013high}.
Industry demand for high-fidelity fluid simulation for large eddy simulation (LES) or direct numerical simulation (DNS) has motivated the development of HO methods. 
However, high computational cost and robustness concerns, especially in such high-cost simulations, have prevented the widespread adoption of HO methods.

The discontinuous Galerkin (DG) method is a finite element method that allows arbitrarily high convergence orders by using polynomial basis functions and localized, highly parallelizable structure by allowing discontinuities in the solution between elements~\cite{cockburn2001Runge}.
First introduced for the solution of the neutron transport equations~\cite{reed1973triangular}, DG has become a popular choice for high-fidelity fluids applications.
A drawback of DG is the prohibitively small time step size for linear stability, leading to the development of flux reconstruction (FR),  first proposed by Huynh~\cite{huynh_flux_2007}.
FR allows a larger time step size by adding a correction function to the flux~\cite{huynh_flux_2007}.
Wang and Gao~\cite{wang2009unifying} extended Huynh's approach to more element types in the lifting collocation penalty (LCP) approach.
The linearly stable versions of FR and LCP methods were generalized into energy-stable FR (ESFR) for linear advection by Vincent \textit{et al.}~\cite{vincent_new_2011}.
Within the ESFR framework, it is possible to recover a number of schemes whose trial space of functions are only piecewise continuous, such as DG, Huynh's FR, and spectral-difference by adjusting a single parameter~\cite{vincent_new_2011}.
The class of ESFR methods is also provably stable for linear diffusion equations. 
Castonguay~\cite{castonguay2013energy} proved stability for diffusion using the local discontinuous Galerkin flux.
This was extended to several compact numerical fluxes in 1D~\cite{quaegebeur2019stability} and 2D~\cite{quaegebeur2019stability2D} by  Quaegebeur, Nadarajah and co-authors.
Allaneau and Jameson~\cite{allaneau2011connections} showed that ESFR methods can be cast as filtered DG methods for linear advection on 1D elements. 
ESFR as filtered DG was further generalized for general nonlinear conservation laws on curvilinear elements in higher dimensions by Zwanenburg and Nadarajah~\cite{zwanenburg2016equivalence}.
ESFR can easily be implemented in an existing DG code using the filtered-DG approach.
While ESFR affords stability proofs for linear PDEs, there are no guarantees provided for stability in nonlinear cases. 
Nonlinear stability is an important issue for industrial applications, where the PDEs of interest can be highly nonlinear.

Entropy stability can address the problem of nonlinear stability in high-order methods.
Tadmor~\cite{tadmor1987numerical} proposed the construction of discretizations that discretely uphold the entropy inequality.
Such notions were first extended to HO methods by LeFloch~\cite{lefloch2002fully} for arbitrarily HO finite difference methods.
The first development of current split forms in the HO community was by Fisher \textit{et al.}~\cite{fisher2013discretely}, who constructed summation-by-parts (SBP) split operators on bounded domains. 
Gassner~\cite{gassner2013skew} applied SPB operators to construct entropy-stable DG schemes. 
Entropy-stable methods have been applied to FR by~\cite{ranocha2016summation,abe2018stable}, but with the requirement for collocated flux and solution nodes. 
Chan~\cite{chan2018discretely} developed a skew-symmetric stiffness operator that enabled entropy-stable DG on general choices of solution and flux nodes, provided that projections between nodes used the entropy variables.
Numerical experiments confirmed that choosing flux nodes with higher integration strength improved the robustness of solutions in the presence of density gradients~\cite{chan2022entropy}.
In this paper, we will use the scheme developed by Cicchino and co-authors~\cite{cicchino2022nonlinearly,cicchino2022provably,cicchino2023discretely}. Their nonlinearly-stable FR (NSFR) scheme employs splitting on the volume fluxes and uses entropy-projected variables to formulate an entropy-stable scheme for any ESFR variation.
The scheme demonstrates entropy stability for the Burgers' ~\cite{cicchino2022nonlinearly} and Euler equations~\cite{cicchino2023discretely} using uncollocated nodes, curvilinear meshes, and arbitrary FR correction functions while maintaining efficient scaling at $\mathcal{O}(p^{d+1})$. 
All of the aforementioned HO entropy-stable methods are formulated semi-discretely, such that a very small time step size is required for unsteady problems. While they provide a nonlinear stability guarantee, their applicability is limited by the requirement for an extremely small time step size.

As small time step sizes cause HO entropy-stable methods to be costly, it is attractive to develop temporal integration methods which consider entropy stability, thereby formulating fully-discrete (FD) entropy-stable schemes. 
One approach is to use space-time methods, as analyzed by Friedrich \textit{et al.}~\cite{friedrich2019entropy}. 
However, space-time DG methods are not trivial to implement, and geometry can be challenging in higher dimensions.
For more conventional method-of-lines strategies, the relaxation Runge-Kutta (RRK) method, introduced by Ketcheson~\cite{ketcheson2019relaxation} and expanded by Ranocha \textit{et al.}~\cite{ranocha2020relaxation}, allows many RK methods to be used to provide fully-discrete entropy stability. 
The method has been extended to multistep~\cite{ranocha2020general} and IMEX~\cite{kang2022entropy,li2022implicit} temporal methods. 
{Najafian and Vermeire~\cite{najafian2024energy} proposed the relaxation-free Runge-Kutta method as an alternative to Ketcheson's inner-product RRK. This method modifies the Butcher tableau instead of adjusting the time step size.}
To improve parallel performance, ~\cite{ranocha2020fully} introduced a local RRK method, but the approach lacks a global temporal entropy guarantee.
A performance review found that global RRK is approximately 1.5 times more expensive at the same time step size~\cite{aljahdali2022on,rogowski2022performance}, but can be justified considering the nonlinear stability properties afforded.
The RRK method has been applied to formulate fully-discrete schemes by other authors, whose results demonstrate robustness. 
Various entropy-stable spatial schemes have been used in conjunction with RRK to construct FD entropy-stable schemes, including but not limited to collocated DGSEM for turbulence simulations~\cite{parsani2021simulation}, DGFEM difference methods \cite{yan2023entropy}, and ADER-DG schemes~\cite{gaburro2023high}. 
In this work, we use the global variant of RRK~\cite{ketcheson2019relaxation, ranocha2020relaxation}.
This paper extends the semi-discrete NSFR scheme to a fully-discrete entropy-stable scheme.
We do not know of any authors who have applied the root-solving version of RRK to non-DG FR. This work is the first FD-NSFR in broken Sobolev norms for general entropy functions.
The results herein show fully-discrete entropy stability in the broken Sobolev norm for inner-product numerical entropy functions and nullification of temporal numerical entropy in the physical $L^2$ norm for general convex numerical entropy functions.

Questions have been raised in the community regarding the extent of stabilization provided by entropy-stable high-order schemes. While it can be proved that the solution will not diverge, even bounded oscillations may still corrupt the solution. Chan and co-authors~\cite{chan2022entropy} highlight that entropy-stable discretizations {\color{black}do not prevent density from becoming negative} in flows with strong gradients. While the entropy projection method used therein improved robustness, not all problems could be solved in the absence of limiters. 
In the broader context of HO methods, researchers typically achieve the stabilization required for industrial cases through strategies including limiting or filtering the solution or flux; using higher-strength numerical integration to better capture the nonlinear flux; or adding artificial dissipation to stabilize the solution. While such strategies demonstrate sufficient robustness to yield a solution, they may obscure important solution details, or even lead to an entropy violation. For this reason, it is of interest to develop a thorough understanding of the robustness of the FD-NSFR method in the absence of the aforementioned strategies. The investigation presented in this paper demonstrates that the FD-NSFR method may be more robust at large-time steps. 
FD-NSFR maintained accurate integrated variables and solution contours at a higher target time-step size than SD-NSFR.

In summary, in this work, we present a fully-discrete entropy stable flux reconstruction scheme in the broken Sobolev norm. The remainder of this paper will present our FD-NSFR scheme and numerical experiments that demonstrate its properties.
Following a brief discussion of notation in section \ref{sec:notation}, section~\ref{sec:RRK-overview} introduces the RRK approach of Ketcheson~\cite{ketcheson2019relaxation} and Ranocha~\cite{ranocha2020relaxation} in the context of an initial value problem which is conservative or dissipative for a general integrated numerical entropy.
The NSFR semidiscretization is presented in section \ref{sec:spatial-NSFR}. 
In section \ref{sec:scalar}, we present an FD-NSFR method for PDEs with an inner-product numerical entropy function, which is fully discretely stable in the broken Sobolev norm, along with the results of numerical experiments using the inviscid and viscous Burgers' equation.
We follow with a FD-NSFR method for PDEs that have a general convex numerical entropy function in section~\ref{sec:vector-NSFR}. Therein, we demonstrate that temporal numerical entropy is prevented in the $L^2$ norm, and present numerical experiments using the Euler and Navier-Stokes equations.
We close with an investigation into the robustness of the FD-NSFR scheme in section~\ref{sec:robustness}.

\section{Notation} \label{sec:notation}
We begin by introducing notation which will be used throughout this work. We use italic $a$ for scalar values, boldface $\mathbf{a}$ for vectors, and calligraphic $\mathcal{A}$ for matrices or matrix operators. Repeated indices indicate summation over the problem dimensions. Note that vectors are assumed to be {row} vectors, such that $\mathbf{a}^T$ is a column vector. As for subscripts, $h$ denotes the discrete solution, which is replaced by $m$ when the vector or matrix represents the solution or operator at a discrete element level. 
We use superscripts to indicate time indices, such that $\mathbf{u}^n=\mathbf{u}(t^n)$ is the solution at the $n$-th time step. 

\section{Relaxation Runge-Kutta Temporal Integration} \label{sec:RRK-overview}
Consider the initial value problem (IVP)
\begin{eqnarray}
\frac{\text{d} \mathbf{u}(t)}{\text{d} t} &=& \mathbf{f}(t,\mathbf{u}(t)), \quad t \in [t_o,t_f], \\
\mathbf{u}(t_o) &=& \mathbf{u}_o,
\label{ivp}
\end{eqnarray}
where $\mathbf{u} \colon \mathbb{R} \rightarrow \mathbb{R}^m$ and $\mathbf{f} \colon \mathbb{R} \times \mathbb{R}^m \rightarrow \mathbb{R}^m$. In this study, our emphasis is on IVPs that are either conservative or  dissipative for a specific inner-product norm: 
\begin{equation}
    \frac{\text{d} }{\text{d} t} \eta(\mathbf{u}(t)) = \langle \eta^\prime(\mathbf{u}(t)),\mathbf{f}(t,\mathbf{u}(t)) \rangle \leq 0,
\end{equation}
where $\eta$ is a smooth convex function. In the case of a scalar conservation law, $\eta(\mathbf{u}(t))$ can be taken to be $\frac{1}{2} \vert\vert \mathbf{u}(t) \vert\vert^2$ and is commonly referred to as the energy. Using the Euler and Navier-Stokes equations, $\eta$ represents the total entropy in the domain. For an energy or entropy-conservative system, 
\begin{displaymath}
    \eta (\mathbf{u}^{n+1}) = \eta (\mathbf{u}^n),
\end{displaymath}
while for a dissipative system,
\begin{displaymath}
    \eta (\mathbf{u}^{n+1}) \leq \eta (\mathbf{u}^n).
\end{displaymath}
For the remainder of this section,  we present the discretization of a time-independent ODE to simplify notation, though the methods can easily be extended to time-dependent ODEs. An RK method with $s$ stages advances the solution from the $n$-th time step to the $(n+1)$-th by constructing stages $\mathbf{u}^{(i)}$,
\begin{equation}
    \mathbf{u}^{(i)} = \mathbf{u}^{n} + \Delta t \sum_{j=0}^s a_{ij} \mathbf{f}(\mathbf{u}^{(i)}), \ \ \ \text{for}\  i = 1, ..., s,
    \label{eq:RK-stages}
\end{equation}
\noindent and assembling the stages together,
\begin{equation}
    \mathbf{u}^{n+1} = \mathbf{u}^n + \Delta t \sum_{i=0}^{s} b_i \mathbf{f}(\mathbf{u}^{(i)}),
    \label{eq:RK-assemble}
\end{equation}
\noindent where the coefficients $a_{ij}$ and $b_i$  of the Butcher tableau are chosen such that the method has a global truncation error of order $p$. We use explicit RK methods in this work, having $a_{ij}=0$ for $i \geq j$.

{\color{black}To ensure that the value of $\eta$ is conservative or dissipative over a time step,} 
we use the relaxation Runge-Kutta (RRK) method for the temporal discretization~\cite{ketcheson2019relaxation,ranocha2020relaxation}. 

Here, we consider that $\mathbf{u}$ is a global solution vector and $\mathbf{f}(\mathbf{u})$ is the unsteady residual resulting from a semi-discretely entropy stable spatial discretization.
The RRK method proceeds as a standard $s$-stage RK method to construct stages $\mathbf{u}^{(i)}$ according to Eq. (\ref{eq:RK-stages}), 
but adjusts the time step size by a relaxation parameter $\gamma^n$ at each time step,
\begin{equation}
    \mathbf{u}^{n+1}_\gamma = \mathbf{u}(t^n + \gamma^n \Delta t) = \mathbf{u}^n + \gamma^n \Delta t \sum_{i=0}^{s} b_i \mathbf{f}(\mathbf{u}^{(i)}),
\end{equation}
\noindent where $\mathbf{u}^{n+1}_\gamma \approx \mathbf{u}(t^n + \gamma^n \Delta t)$, with the relaxation parameter defined such that the entropy change is an estimate of the order of the RK scheme,
\begin{equation}
    0=\eta(\mathbf{u}^{n+1}_\gamma) - \eta(\mathbf{u}^n) - \gamma^n \Delta t \sum_{i=1}^s b_i \langle \mathbf{\eta}^\prime(\mathbf{u}^{(i)}), \mathbf{f}(\mathbf{u}^{(i)}) \rangle.
    \label{eq:rrk-root}
\end{equation}
\noindent The first two terms are the actual change in integrated numerical energy or entropy over the adjusted time step, while the third term estimates the change over the time step due to the spatial discretization.
Equation~(\ref{eq:rrk-root}) is solved either algebraically or numerically, depending on the form of the numerical entropy function. We choose to use the global RRK version in all cases, as the local version is not able to conserve entropy across the entire domain~\cite{ranocha2020fully}.

\section{Spatial Discretization: Semi-Discrete Nonlinearly Stable Flux Reconstruction} \label{sec:spatial-NSFR}
In this section, we will present the discretization of a multi-dimensional, vector-valued convection-diffusion equation of the general form,

\begin{equation}
\begin{cases}
    \frac{\partial}{\partial t}\mathbf{u}(\mathbf{x},t) +\nabla\cdot\mathbf{f}_c(\mathbf{u}(\mathbf{x},t) ) = \nabla\cdot\mathbf{f}_v(\mathbf{u}(\mathbf{x},t),\nabla \mathbf{u}(\mathbf{x},t) ),\quad \text{in } \Omega \times [0,t_f)\\
    \mathbf{u}(\mathbf{x},0)=\mathbf{u}_0(\mathbf{x}).
\end{cases}
\label{eq:3d_cons_law}
\end{equation}
Here, let $t_f >0$ be a final time and let $\Omega \subset \mathbb{R}^{d}$, where $d\leq 3$, be a bounded physical domain with Lipschitz boundary $\partial \Omega$. In addition,  $\mathbf{u}(\mathbf{x},t)$ denotes the state vector with $\mathbf{x}\coloneqq[x,y,z]\in\Omega$, $\mathbf{f}_c$ the convective flux vector, $\mathbf{f}_v$ is the viscous flux that is linear in the gradient $\nabla \mathbf{u}$, and $\mathbf{u}_0$ is an initial state.

\subsection{Preliminaries}
The discretization presented herein closely follows the formulation of Cicchino and co-authors~\cite{cicchino2022nonlinearly,cicchino2022provably,cicchino2023discretely,cicchino2024thesis}.  We will describe the formulation and features of the scheme, but direct the reader to the papers ~\cite{cicchino2022nonlinearly,cicchino2022provably,cicchino2023discretely} for complete details and proofs. {The methods described herein are implemented in the open-source Parallel High-Order Library for PDEs (\lstinline{PHiLiP}) developed by the Computational Aerodynamics Group at McGill University~\cite{shi2021full}.}

Let us consider that the physical domain $\Omega$ divided into $M$ non-overlapping elements $\Omega_m^d$, $\Omega \simeq \Omega^h \coloneqq \bigcup_{m=1}^M \Omega_m^d$, where, $m$ represents the $m$-th element. In this work we will only consider a discretization based on hexahedral tensor product elements.
We convert Eq. (\ref{eq:3d_cons_law}) to a system of first-order equations,
\begin{equation}
    \begin{cases}
        \frac{\partial}{\partial t}\mathbf{u}_h(\mathbf{x},t) +\nabla\cdot\mathbf{f}_c (\mathbf{u}_h(\mathbf{x},t) ) = \nabla\cdot\mathbf{f}_v(\mathbf{u}_h(\mathbf{x},t), \bm{\sigma}_h(\mathbf{x},t) ) \\
        \bm{\sigma}_h(\mathbf{u}(\mathbf{x},t) ) = \nabla \mathbf{u}_h(\mathbf{x},t),
    \end{cases}
    \label{eq:diffusion-sys}
\end{equation}
where $\mathbf{u}_h \in \mathbb{R}^{N_s}$ is the discontinuous discrete solution, $N_s$ is the number of components in the state, $\bm{\sigma}_h$ represents the corrected gradient of $\mathbf{u}_h$. The solution $\mathbf{u}_h$ is approximated and is assumed to belong to the DG approximation space of discontinuous piecewise polynomials 
\begin{equation}
    {\cal V}_h^p = \Bigl\{ v_h \in [L^2(\Omega)]^{N_s}, \quad v_h \vert_{\Omega_m^d} \in [{\cal P}^p (\Omega_m^d)]^{N_s} \;\;\forall \Omega_m^d \Bigr\},
\end{equation}
while $\bm{\sigma}_h$ belongs to 
\begin{equation}
    {\cal W}_h^p = \Bigl\{ \mathbf{w}_h \in [L^2(\Omega)]^{d\times N_s}, \quad \mathbf{w}_h \vert_{\Omega_m^d} \in [{\cal P}^p (\Omega_m^d)]^{d\times N_s} \;\;\forall \Omega_m^d \Bigr\},
\end{equation}
where $L^2(\Omega)$ is the space of square-integrable functions on the domain $\Omega$, and $p$ is the polynomial degree of the approximation space.

\noindent  We now present the discretization for a single state of the vector-valued $\mathbf{u}$, which we denote as $u$. The global approximate solution, ${u}_h(\mathbf{x},t)$ and the approximate auxiliary variable $\bm{\sigma}_h(\mathbf{x},t)$ of Eq.~(\ref{eq:diffusion-sys}) can be composed as piecewise polynomial approximations,
\begin{equation}
    {u}(\mathbf{x},t) \simeq {u}_h(\mathbf{x},t)=\bigoplus_{m=1}^M {u}_m(\mathbf{x},t), \ \ \ \ \ \  \bm{\sigma}(\mathbf{x},t) \simeq \bm{\sigma}_h(\mathbf{x},t)=\bigoplus_{m=1}^M \bm{\sigma}_m(\mathbf{x},t).
\end{equation}
\noindent On each element, $m$, we introduce the modal (hierarchical) polynomial basis functions of maximal order $p$ to represent the solution,
\begin{equation}
     {u}_m(\mathbf{x},t)  = \sum_{i=1}^{N_p}{{\chi}_{m,i}(\mathbf{x})\hat{{u}}_{m,i}(t)}, \ \ \ \ \ \  
     \bm{\sigma}_m(\mathbf{x},t)  = \sum_{i=1}^{N_p}{{\chi}_{m,i}(\mathbf{x})\hat{\bm{\sigma}}_{m,i}(t)},
\end{equation}
\noindent where $\mathbf{\hat{u}}_{m} \in \mathbb{R}^{N_{p}}$
is a row vector holding the modal coefficients on the $m$-th element for each state, and  
$\bm{\hat{\sigma}}_{m} \in \mathbb{R}^{d \times N_{p}}$ 
are the modal coefficients for the auxiliary variables. The vectors are of length $N_p=(p+1)^{d }$ for a $p$-th order polynomial basis.
The polynomial basis functions for the solution are defined as
\begin{equation}
     \bm{\chi}_m(\mathbf{x}) \coloneqq [\chi_{m,1}(\mathbf{x}), \ \chi_{m,2}(\mathbf{x}), \  \dots, \  \chi_{m,N_p}(\mathbf{x})]
     =\bm{\chi}(x)\otimes \bm{\chi}(y)\otimes \bm{\chi}(z).
\end{equation}
We exclusively use tensor product elements because they allow for sum-factorization techniques, enabling efficient scaling. The flux basis $\bm{\phi}_m$ is defined in the same way as the solution basis. 
In this work, for the interpolation or solution nodes, we choose Gauss-Lobatto-Legendre (GLL) nodes. 
We use either Gauss-Legendre (GL) nodes for integration, which exactly integrates polynomials up to degree $2p+1$, or GLL nodes, which exactly integrate polynomials up to degree $2p-1$. 
All numerical experiments use the same flux and solution {polynomial degree} without overintegrating the flux.

We choose to work in a transformed space, where each element is mapped to a standard reference element, $\Omega^r$. Coordinates are mapped from physical coordinates $\mathbf{x} = [x, y, z]$ to reference coordinates 
\begin{equation}
    \bm{\xi}^r \coloneqq \{ [\xi \text{, } \eta \text{, }\zeta]:-1\leq \xi,\eta,\zeta\leq1 \},
\end{equation}
\noindent using the mapping function $\bm{x}_m^c(\bm{\xi}^r)\coloneqq \bm{\Theta}_m(\bm{\xi}^r)$ and using the superscript $r$ to indicate quantities on the reference element. We use a reference element with volume nodes denoted as $\bm{\xi}_v^r$ and facet nodes as $\bm{\xi}_f^r$. The mapping introduces metrics terms and their evaluation as well as those of the transformed fluxes are critical to ensure free-stream preservation. The reader is advised to refer to~\cite{cicchino2022provably} for the proper treatment of the metric terms within an entropy/energy-conserving or stable scheme. {\color{black}All results in this work use straight-sided meshes, therefore we present most formulations without a detailed discussion of curvilinear coordinates.}

\subsection{Energy Stable Flux Reconstruction in Filtered DG Form}
Omitting the full derivation provided by Cicchino \textit{et al.}~\cite{cicchino2022nonlinearly,cicchino2022provably,cicchino2023discretely}, we write the discrete version of the strong formulation of the primary equation on a single element $m$ where we seek $\mathbf{u}_m \in {\cal V}_h^p$ such that for each test function $v_h \in {\cal V}_h^p$, 
\begin{equation}
\begin{aligned}
    {\mathcal{M}}_{m} \frac{d}{dt}\hat{\mathbf{u}}_m^T
    &+\bm{\chi}(\bm{\xi}_v^r)^T \mathcal{W} \nabla^r \bm{\phi}(\xi_v^r) \cdot
    \left( \hat{\mathbf{f}}^r_{c,m}-\hat{\mathbf{f}}^r_{v,m}
    \right)^T
    \\
    &+\sum_{f=1}^{N_f}\sum_{k=1}^{N_{fp}} \bm{\chi}(\bm{\xi}_{f,k}^r) \mathcal{W}_{f,k}
    \Bigl[
    \hat{\mathbf{n}}^r\cdot 
    \left(
    \left(\mathbf{f}_{c,m}^{*,r} - \mathbf{f}_{v,m}^{*,r} 
    \right)^T- \bm{\phi}(\xi_{f,k}^r) 
    \bigl( \hat{\mathbf{f}}^r_{c,m} -\hat{\mathbf{f}}^r_{v,m}
    \bigr)^T
    \right)\Bigr]
    =\mathbf{0}^T.
\end{aligned}
\label{eq:DGstrong-prim}
\end{equation}

We discretize the auxiliary equation in the same way, arriving at~\cite{cicchino2024thesis}
\begin{equation}
    \begin{aligned}
    {\mathcal{M}}_{m} \hat{\bm{\sigma}}_m ^T
    &- 
    \bm{\chi}(\bm{\xi}_v^r)^T \mathcal{W} \nabla^r \bm{\phi}(\xi_v^r) (\hat{\mathbf{u}}^r_m)^T \\
    &-\sum_{f=1}^{N_f}\sum_{k=1}^{N_{fp}} \bm{\chi}(\bm{\xi}_{f,k}^r)\mathcal{W}_{f,k}
    \Bigl[\hat{\mathbf{n}}^r \Bigl( (\mathbf{u}_m^{*,r})^T- \bm{\phi}(\xi_{f,k}^r) (\hat{\mathbf{u}}^r_{m})^T\Bigr)\Bigr]=\mathbf{0}^T.
\end{aligned}
    \label{eq:DGstrong-aux}
\end{equation}
In the two preceding equations, the superscript $^*$ denotes a numerical flux and $\hat{\mathbf{n}}^r$ is a unit normal vector in the reference element.
The convective numerical flux ${\mathbf{f}}_{c,m}^{*,r}$, solution numerical flux $\mathbf{u}^{*,r}_m$ and diffusive numerical flux $\mathbf{f}_{v,m}^{*,r}$ are chosen according to desired stability properties.
The quadrature weights are stored in diagonal operators $\mathcal{W}$ at volume nodes and $\mathcal{W}_f$ at face nodes. 
The Jacobian $\mathcal{J}_m$ is evaluated at volume nodes. The mass matrix $ \mathcal{M}_m \in \mathbb{R}^{N_p} \times \mathbb{R}^{N_p}$ is evaluated using volume nodes,
\begin{equation}
    \begin{aligned}
       \mathcal{M}_m= \bm{\chi}(\bm{\xi}_v^r)^T\mathcal{W}\mathcal{J}_m\bm{\chi}(\bm{\xi}_v^r).
    \end{aligned}\label{eq: mass}
\end{equation}
ESFR is a cousin of DG that adds a correction function to the flux such that it is $C^0$ continuous between elements~\cite{huynh_flux_2007}. 
The divergence of the correction functionis in the same polynomial space as the solution. We implement ESFR as a modally-filtered DG method in the approach of Zwanenburg and Nadarajah~\cite{zwanenburg2016equivalence}, such that the only difference between strong DG as it was expressed in Eq.~(\ref{eq:DGstrong-prim}-\ref{eq:DGstrong-aux}) and ESFR is a modification to the mass matrix indicated by the tilde,
\begin{equation}
\begin{aligned}
    \tilde{\mathcal{M}}_{m,p} \frac{d}{dt}\hat{\mathbf{u}}_m^T
    &+\bm{\chi}(\bm{\xi}_v^r)^T \mathcal{W} \nabla^r \bm{\phi}(\xi_v^r) \cdot
    \left( \hat{\mathbf{f}}^r_{c,m}-\hat{\mathbf{f}}^r_{v,m}
    \right)^T
    \\
    &+\sum_{f=1}^{N_f}\sum_{k=1}^{N_{fp}} \bm{\chi}(\bm{\xi}_{f,k}^r) \mathcal{W}_{f,k}
    \Bigl[
    \hat{\mathbf{n}}^r\cdot 
    \left(
    \left(\mathbf{f}_{c,m}^{*,r} - \mathbf{f}_{v,m}^{*,r} 
    \right)^T 
    - \bm{\phi}(\xi_{f,k}^r) 
    \bigl( \hat{\mathbf{f}}^r_{c,m} -\hat{\mathbf{f}}^r_{v,m}
    \bigr)^T
    \right)\Bigr]
    =\mathbf{0}^T \\
    \tilde{\mathcal{M}}_{m,a} \hat{\bm{\sigma}}_m ^T
    &- 
    \bm{\chi}(\bm{\xi}_v^r)^T \mathcal{W} \nabla^r \bm{\phi}(\xi_v^r) (\hat{\mathbf{u}}^r_m)^T \\
    &-\sum_{f=1}^{N_f}\sum_{k=1}^{N_{fp}} \bm{\chi}(\bm{\xi}_{f,k}^r)\mathcal{W}_{f,k}
    \Bigl[\hat{\mathbf{n}}^r ( (\mathbf{u}_m^{*,r})^T- \bm{\phi}(\xi_{f,k}^r) (\hat{\mathbf{u}}^r_{m})^T)\Bigr]=\mathbf{0}^T.
\end{aligned}
    \label{eq:ESFR}
\end{equation}
ESFR was introduced as filtered DG in a one-dimensional linear advection case by Allaneau and Jameson~\cite{allaneau2011connections}, and generalized to three-dimensional, nonlinear equations by Zwanenburg and Nadarajah~\cite{zwanenburg2016equivalence}. In this work, we use the same concept for the auxiliary formulation.
The modified mass matrix for the primary and auxiliary equations is a sum of the DG mass matrix and an ESFR contribution matrix,
\begin{equation}
    \tilde{\mathcal{M}}_{m,p} = \mathcal{M}_m + \mathcal{K}_p, \ \ \ \  \tilde{\mathcal{M}}_{m,a} = \mathcal{M}_m + \mathcal{K}_a,
\end{equation}
\noindent 

For clarity, we define the FR contribution operators for one-, two- and three-dimensional domains separately. When $d=1$, the FR contribution operators for a discretization of polynomial degree $p$ are
\begin{equation}
    \mathcal{K}_{p,1D}
   = c (\mathcal{D}_\xi^p)^T \mathcal{M}_m \mathcal{D}^p, \ \ \ \ \ \ 
   \mathcal{K}_{a,1D}
   = \kappa (\mathcal{D}_\xi^p)^T \mathcal{M}_m \mathcal{D}_\xi^p. 
\label{defineKmatrix}
\end{equation}
using $c$ in a normalized Legendre reference basis, as discussed in~\cite[Remark 2.1]{cicchino2022nonlinearly}.

$\mathcal{D}_\xi^p$ is the modal differential operator from the unfiltered strong DG formulation raised to the power $p$,
\begin{equation}
    \mathcal{D}_\xi^p=(\mathcal{M}^{-1} \bm{\chi}(\bm{\xi}_v^r)^T\mathcal{W} \frac{\partial \bm{\chi}}{\partial \xi}(\bm{\xi}_v^r) )^p.
    \label{eq:diff_oper}
\end{equation}

When $d=2$, the FD correction operators are,
\begin{equation}
\begin{aligned}
    \mathcal{K}_{p,2D}
   &=\sum_{s,v } c_{(s,v)}\Big(\mathcal{D}_\xi^s \mathcal{D}_\eta^v \Big)^T\mathcal{M}_m\Big(\mathcal{D}_\xi^s \mathcal{D}_\eta^v \Big), \\
   \mathcal{K}_{a,2D}
   &=\sum_{s,v } \kappa_{(s,v)}\Big(\mathcal{D}_\xi^s \mathcal{D}_\eta^v\Big)^T\mathcal{M}_m\Big(\mathcal{D}_\xi^s \mathcal{D}_\eta^v \Big),
    \label{eq:Km2D}
\end{aligned}
\end{equation}
where $c_{(s,v)}$ and $\kappa_{(s,v)}$ are found according to the spatial polynomial order $p$ and the 1D correction parameter $c$ or $\kappa$~\cite{cicchino2022provably},
\begin{equation}
    c_{(s,v)} = c^{(s/p + v/p)}, \ \ \ \ \kappa_{(s,v)} = \kappa^{(s/p + v/p)},
\end{equation}
where $s$ and $v$ are integer sets $\{0,p\}$ with the condition $s+v\geq p$. The modal differential operator for the second reference coordinate $\mathcal{D}_\eta$ is defined alike Eq. (\ref{eq:diff_oper}).

Extending to $d=3$,
\begin{equation}
\begin{aligned}
    \mathcal{K}_{p,3D}
   &=\sum_{s,v,w } c_{(s,v,w)}\Big(\mathcal{D}_\xi^s \mathcal{D}_\eta^v\mathcal{D}_\zeta^w \Big)^T\mathcal{M}_m\Big(\mathcal{D}_\xi^s \mathcal{D}_\eta^v\mathcal{D}_\zeta^w \Big), \\
   \mathcal{K}_{a,3D}
   &=\sum_{s,v,w } \kappa_{(s,v,w)}\Big(\mathcal{D}_\xi^s \mathcal{D}_\eta^v\mathcal{D}_\zeta^w \Big)^T\mathcal{M}_m\Big(\mathcal{D}_\xi^s \mathcal{D}_\eta^v\mathcal{D}_\zeta^w \Big),
    \label{eq:Km3D}
\end{aligned}
\end{equation}
where $c_{(s,v,w)}$ and $\kappa_{(s,v,w)}$ are defined similarly to the $d=2$ case~\cite{cicchino2022provably},
\begin{equation}
    c_{(s,v,w)} = c^{(s/p + v/p+ w/p)}, \ \ \ \ \kappa_{(s,v,w)} = \kappa^{(s/p + v/p+ w/p)},
\end{equation}
and the modal differential operator for the final reference coordinate $\mathcal{D}_\zeta$ is also defined alike Eq. (\ref{eq:diff_oper}).

Various ESFR schemes can be recovered through the choice of $c$, for instance, those defined by Castonguay \textit{et al.}~\cite{castonguay2012new}. 
As the value of $c$ increases, it is expected that the linear stability restriction will allow larger time steps, until a point at which an order is lost~\cite{castonguay2012new}.
While $c$ and $\kappa$ filter the primary and auxiliary equations separately~\cite{castonguay2013energy}, we use the same values $c=\kappa$ to generate results in this work.

We use three FR schemes to generate numerical results in this paper. They are listed in order of increasing magnitude: $c_{DG}=0$, which recovers DG; $c_{Hu}$, which recovers Huynh's~\cite{huynh_flux_2007} original FR scheme~\cite{vincent_new_2011}; and $c_{+}$, which was found to be the largest $c$ value that preserves the convergence order for linear advection~\cite{castonguay2012high}. 
\subsection{Nonlinearly Stable Flux Reconstruction}

The entropy-stable split form discretizations used herein  are developed by Cicchino, Nadarajah and co-authors~\cite{cicchino2022nonlinearly,cicchino2022provably,cicchino2023discretely}.
The most recent paper~\cite{cicchino2023discretely} extends to three-dimensional vector-valued PDEs, proving entropy stability, global conservation, and free-stream preservation. 
We use the same primary-auxiliary set as in of~\cite[Eq. (6.8)]{cicchino2024thesis} for the final NSFR semidiscretization. The final computational form for the primary equation is
\begin{equation}
\begin{aligned}
       \frac{d}{d t} \hat{\mathbf{u}}_m^T = (\tilde{\mathcal{M}}_{m,p})^{-1} \Bigl(
     &- \left[ \bm{\chi}(\bm{\xi}_v^r)^T\:  \bm{\chi}(\bm{\xi}_f^r)^T\right]
     \left[ \left(\Tilde{\mathcal{Q}}-\Tilde{\mathcal{Q}}^T\right) \odot \mathcal{F}_{c,m}^r \right]\mathbf{1}^T \\
     &- \sum_{f=1}^{N_f}\sum_{k=1}^{N_{fp}}
    \bm{\chi}(\bm{\xi}_{fk}^r)^T \mathcal{W}_{fk} \hat{\mathbf{n}}^r \cdot
    (\mathbf{f}_{c,m}^{*,r} )^T\\
    &+\bm{\chi}(\bm{\xi}_v^r)^T \mathcal{W} \nabla^r \bm{\phi}(\xi_v^r) \cdot (\hat{\mathbf{f}}^r_{m,v})^T \\
    &+\sum_{f=1}^{N_f}\sum_{k=1}^{N_{fp}} \bm{\chi}(\bm{\xi}_{f,k}^r){W}_{f,k}
    \Bigl[
    \hat{\mathbf{n}}^r\cdot 
    \bigl(\mathbf{f}_{v,m}^{*,r} - \bm{\phi}(\xi_{f,k}^r) \hat{\mathbf{f}}^r_{v,m} 
    \bigr)^T\Bigr] 
    \Bigr),
\end{aligned}
    \label{eq:NSFR-vector}
\end{equation}
\noindent where the general skew-symmetric stiffness operator of Chan~\cite{chan2018discretely} is employed for the convective part,
\begin{equation}
    \Tilde{\mathcal{Q}}-\Tilde{\mathcal{Q}}^T = 
    \begin{bmatrix}
        \mathcal{W}\nabla^r\bm{\phi}(\bm{\xi}_v^r) - \nabla^r\bm{\phi}(\bm{\xi}_v^r) ^T \mathcal{W}
        &  \sum_{f=1}^{N_f} \bm{\phi}(\bm{\xi}_f^r)^T \mathcal{W}_f \text{diag}(\hat{\mathbf{n}}^r_f) \\
    -  \sum_{f=1}^{N_f}  \mathcal{W}_f \text{diag}(\hat{\mathbf{n}}^r_f) \bm{\phi}(\bm{\xi}_f^r) & \mathbf{0} 
    \end{bmatrix}
    \label{eq: skew-symm qtilde oper}
\end{equation}
\noindent and the two-point convective reference fluxes are stored in $\left( \mathcal{F}^r_{c,m}\right)_{ij}$, defined as 
\begin{equation}
     {  \left( \mathcal{F}^r_{c,m}\right)_{ij}} = 
     {\mathbf{f}_{c}\left(\Tilde{\mathbf{u}}_m(\bm{\xi}_{i}^r),\Tilde{\mathbf{u}}_m(\bm{\xi}_{j}^r)\right)}
     {\left(
       \frac{1}{2}\left(\mathbf{C}_m(\bm{\xi}_{i}^r)+ \mathbf{C}_m(\bm{\xi}_{j}^r) \right)\right)},\:\forall\: 1\leq i,j\leq N_v + N_{fp}.
       \label{eq:two-pt-flux-matrix}
\end{equation}
\noindent The symbol $\odot$ indicates a Hadamard product, which is evaluated using sum factorization~\cite{cicchino2024scalable}. The metric Jacobian cofactor matrix $\mathbf{C}_m$ is formulated as described in~\cite[Section 3.2]{cicchino2022provably}, following the invariant curl form of Kopriva~\cite{kopriva2006metric}, to discretely uphold the geometric conservation law for a fixed mesh. Metric dependence only impacts curvilinear meshes, while we present results only on straight meshes.
Crucially, when we discretize multi-dimensional, vector-valued PDEs, the two-point convective flux in Eq.~(\ref{eq:two-pt-flux-matrix}) is evaluated using entropy-projected variables $\Tilde{\mathbf{u}}_m$, per~\cite[Eq. (42)]{cicchino2023discretely}. This procedure, which was first proposed by Chan~\cite{chan2018discretely}, is key in enabling entropy stability on uncollocated solution and flux or integration nodes. 

The NSFR discretization applies splitting to both the volume and face terms. 
Additionally, the ESFR correction impacts all split terms, both face and volume, due to its construction in the modified mass matrix.
We pair the split form in Eq.~(\ref{eq:NSFR-vector}) with the same auxiliary equation as the second line of Eq.~(\ref{eq:ESFR}) to discretize the viscous system.
The method has been demonstrated with Burgers' equation~\cite{cicchino2022nonlinearly}, the 3D Euler equations~\cite{cicchino2023discretely}, and the 3D Navier-Stokes equations~\cite{cicchino2024thesis,brillon2023use}.

\begin{remark} \label{rem:entropy-stab-spatial} As stated in~\cite[Theorem 6.3]{cicchino2023discretely} the NSFR discretization is discretely entropy conserving if the two-point flux satisfies the Tadmor shuffle condition and entropy stable if additional dissipation through upwinding is present. {\color{black} Define $\mathbf{v} = S'(\mathbf{u})$ as the entropy variables for the convex function, $S(\mathbf{u})$, 
with the total integrated entropy over the computational domain defined as $\eta(\mathbf{u}) = \int_{\Omega} S(\mathbf{u})\; d\Omega$. If we represent $\mathbf{v}$} in the same polynomial space as that of the state $\mathbf{u}$, as $\mathbf{v} = \bm{\chi}(\bm{\xi}_v^r) \mathbf{\hat v}^T$, where $\mathbf{v} \in {\cal V}_h^p$, then the NSFR scheme can be shown to be entropy stable in the discrete broken Sobolev norm based on~\cite[Section 6.3]{cicchino2022provably},
\begin{equation}
    \hat{\mathbf{v}}^T \tilde{\mathcal{M}} \frac{d}{dt} \hat{\mathbf{u}} \leq 0.
    \label{eq:discrete-broken-sobolev}
\end{equation}
\end{remark}

The results of the previous papers~\cite{cicchino2022nonlinearly,cicchino2022provably,cicchino2023discretely} present the discrete numerical entropy derivative in Eq. (\ref{eq:discrete-broken-sobolev}), and confirm that it is conserved when expected by the problem's physics.
However, the entropy stability proofs are semi-discrete, and do not address numerical entropy generated by the temporal discretization.

In addition to provable stability properties, the semidiscretization demonstrates scaling at $\mathcal{O}(p^{d+1})$~\cite{cicchino2023discretely}. This is enabled by sum-factorization. 
In particular, efficient evaluation of the Hadamard product which appears in the split form is described in a technical note~\cite{cicchino2024scalable}. 
\subsubsection{Choice of numerical flux functions} \label{sec:num-flux-choice}

The choice of numerical fluxes for the convective numerical flux, $\mathbf{f}_{c}^*$ is expanded upon after the introduction of each of the convection-diffusion governing equations considered in Sections~\ref{sec:scalar} and~\ref{sec:NS-euler-governing-eqns}.

Upwinding can be used alongside the convective numerical flux in Eq. (\ref{eq:two-pt-flux-matrix}) to formulate an entropy-stable scheme. We use local Lax-Friedrichs upwinding, such that an entropy-stable flux $\mathbf{f}_{c,ES}$ will be formulated by comparison with an entropy-conserving two-point flux, $\mathbf{f}_{c,EC}$,
\begin{equation}
    \mathbf{f}_{c,ES} = \mathbf{f}_{c,EC} - \frac{1}{2} \vert \lambda_{\text{max}} \vert [\![\mathbf{u}]\!],
\end{equation}
\noindent where $\lambda_{\text{max}}$ is the maximum convective eigenvalue at the interface.
We  use DG notation, with the jump and average value for vector-valued quantities respectively defined as
\begin{equation}
    [\![\mathbf{u}]\!] = \hat{\mathbf{n}}^- \cdot \mathbf{u}^- + \hat{\mathbf{n}}^+ \cdot \mathbf{u}^+, \ \ \ \ \ \{\!\!\{\mathbf{u}\}\!\!\} = \frac{1}{2} (\mathbf{u}^- + \mathbf{u}^+),
\end{equation}
\noindent across a face which has interior state $\mathbf{u}^-$ and exterior state $\mathbf{u}^+$.

As for the viscous numerical flux and solution numerical flux, various formulations have been shown to be stable for the linear diffusion equation~\cite{castonguay2013energy,quaegebeur2019stability,quaegebeur2019stability2D}. For the Navier-Stokes equations, we choose the symmetric interior penalty approach~\cite{arnold1982interior}, defining the solution numerical flux and viscous numerical flux as
\begin{equation}
    \mathbf{u}^* = \{\!\!\{\mathbf{u}\}\!\!\}, \ \ \ \  \mathbf{f}_v^* = \{\!\!\{\mathbf{f}_v(\mathbf{u}, \bm{\sigma})\}\!\!\} - \tau [\![\mathbf{u}]\!],
    \label{eq:SIP}
\end{equation}
\noindent where $\tau$ is the penalty term. 

On the other hand, when we discretize the 1D viscous Burgers' equation, we wish to add no extra dissipation to the fluxes. Therefore, we use a central viscous approach,
\begin{equation}
    {u}^* = \{\!\!\{{u}\}\!\!\}, \ \ \ \  {f_v}^* = \{\!\!\{f_v(u,\sigma)\}\!\!\}.
    \label{eq:central-viscous}
\end{equation}

\section{Fully-Discrete Entropy-Stable Schemes for the Scalar Convection-Diffusion Equation} \label{sec:scalar}

{In this section, we present time-stepping strategies for the nonlinearly-stable flux reconstruction scheme when numerical entropy can be expressed in inner-product form. The aim is to enforce the fully discrete entropy inequality through the algebraic RRK approach~\cite{ketcheson2019relaxation}, which we denote as FD-NSFR, while a pairing of standard RK with the NSFR is deemed ``semi-discrete NSFR" (SD-NSFR). We first extend RRK for {\color{black} scalar-valued} NSFR, then verify the implementation for the Burgers' inviscid and viscous equations.}

\subsection{Relaxation Runge-Kutta for the NSFR Scheme} \label{sec:RRK-implementation-scalarc^{k,2}}
Before we proceed with the use of the relaxation Runge-Kutta method for the NSFR scheme, we define the employed spaces. 

\begin{definition}
The classical Sobolev space is defined as 
$$W^{k,p}(\Omega) := \{ \mathbf{u} \in [L^2(\Omega)]^{d\times N_s} : \partial^{\alpha} \mathbf{u} \in [L^2(\Omega)]^{d\times N_s}, |\alpha| \leq k \}$$
with the norm 
    \begin{displaymath}
        \vert\vert \mathbf{u} \vert\vert_{W^{k,p}(\Omega)}^p = \sum_{|\alpha| \leq k}\int_{\Omega} \left(\partial^{\alpha} \mathbf{u} \right)^p\; d\xi,
    \end{displaymath}
and the corresponding seminorm of order $k$
    \begin{equation}
        \vert \mathbf{u} \vert_{W^{k,p}(\Omega)}^p = \sum_{|\alpha| = k}\int_{\Omega} \left(\partial^{\alpha} \mathbf{u} \right)^p\; d\xi.
        \label{eq:k-seminorm}
    \end{equation}

\end{definition}

\begin{definition}
In the case $p=2$, the Sobolev space forms a Hilbert space, with $W^{k,2} = H^k$, where   the Hilbert space $H^k$ admits an inner product and is defined in terms of the $L^2$ inner product
\begin{displaymath}
    \langle \mathbf{u}, \mathbf{v} \rangle_{W^{k,2}(\Omega)} = \sum_{|\alpha| \leq k} \langle \partial^{\alpha} \mathbf{u}, \partial^{\alpha} \mathbf{v} \rangle_{L^2(\Omega)}.
\end{displaymath}
\end{definition}

\begin{definition}
    We define a broken Sobolev space as one that contains only the weighted sum of the first and last in the sequence
        \begin{displaymath}
        \vert\vert \mathbf{u} \vert\vert_{W^{k,p}_c(\Omega)}^p = \int_{\Omega} \mathbf{u}^p + c\left(\partial^{\alpha} \mathbf{u} \right)^p\; d\xi,
    \end{displaymath}
    with the parameter $c$ as the weight.
    \label{defineBrokenSobolevNorm}
\end{definition}

\begin{definition}
    For both linear and nonlinear conservation laws, the semi-discrete flux reconstruction approach as presented in the previous section establishes energy or nonlinear/entropy stability in the stated broken Sobolev space $W^{k,2}_c(\Omega)$ with inner product $\langle \cdot, \cdot \rangle$, inducing the norm, 
    \begin{displaymath}
        \frac{\text{d}}{\text{d}t} \vert\vert \mathbf{u} \vert\vert_{W^{k,2}_c(\Omega)}^2 = \frac{\text{d}}{\text{d}t} \int_{\Omega} \mathbf{u}^2\ +c\left(\frac{\partial^k \mathbf{u}}{\partial \xi^k} \right)^2\; d\xi \leq 0,
    \end{displaymath}
    where the modified norm only contains the zeroth and $p$-th derivative terms. The subscript $c$ in $W^{k,2}_c(\Omega)$ signifies the use of the broken Sobolev space. {\color{black} For brevity, we will write the broken Sobolev space as $W^{k,2}_c$ in subsequent sections.}
\end{definition}

Thus the objective of this work is to extend the semidiscrete energy stability to a fully-discrete scheme through the use of the relaxation Runge-Kutta approach. For initial value problems~(\ref{ivp}) having energy, $\eta := \frac{1}{2} \langle \mathbf{u},\mathbf{u} \rangle_{W^{k,2}_c}$, as the numerical entropy variable, we desire the discrete solution of the NSFR scheme to be \textit{monotonicity preserving}, $\vert\vert \mathbf{u}^{n+1} \vert\vert_{W^{k,2}_c} \leq \vert\vert \mathbf{u}^{n} \vert\vert_{W^{k,2}_c}$ for dissipative systems and discretely conserve energy $\vert\vert \mathbf{u}^{n+1} \vert\vert_{W^{k,2}_c} = \vert\vert \mathbf{u}^{n} \vert\vert_{W^{k,2}_c}$ for conservative systems. 

\noindent Thus, the change of energy from one-time step to the next can be expanded from equation~\ref{eq:rrk-root},
\begin{eqnarray}
    \vert\vert\mathbf{u}^{n+1}_\gamma \vert\vert^2_{W^{k,2}_c} - \vert\vert\mathbf{u}^n \vert\vert^2_{W^{k,2}_c}= && 2\gamma^n \Delta t  \sum_{i=1}^s b_i \langle \mathbf{u}^{(i)}, \mathbf{f}(\mathbf{u}^{(i)}) \rangle_{W^{k,2}_c}\nonumber \\
    &-& 2\gamma^n \Delta t^2  \sum_{i,j=1}^s b_i a_{ij}\langle \mathbf{f}(\mathbf{u}^{(j)}), \mathbf{f}(\mathbf{u}^{(i)}) \rangle_{W^{k,2}_c}\nonumber \\
    &+& (\gamma^n)^2 \Delta t^2  \sum_{i,j=1}^s b_i b_{j}\langle \mathbf{f}(\mathbf{u}^{(j)}), \mathbf{f}(\mathbf{u}^{(i)}) \rangle_{W^{k,2}_c},
    \label{eq:rrk-root-expanded}
\end{eqnarray}
where we evaluate the inner products in the appropriate broken Sobolev norm per Definition~\ref{defineBrokenSobolevNorm},
\begin{equation}
    \langle \mathbf{a} , \mathbf{b} \rangle_{W^{k,2}_c} := \mathbf{a} ^T (\mathcal{M} + \mathcal{K}) \mathbf{b},
\end{equation}
where $\mathcal{K}$ is defined as equation~\ref{defineKmatrix}; since nonlinear stability for the NSFR scheme is established in the stated norm. We can thus find the relaxation parameter by solving an algebraic equation at each time step to eliminate the final two terms similar to ~\cite{ketcheson2019relaxation} and ensure that there is no temporal entropy production,
\begin{equation}
\gamma^n = 
\begin{cases}
    \frac{2\sum_{i,j=1}^m b_i a_{ij} \langle  \mathbf{f}(\mathbf{u}^{(j)}), \mathbf{f}(\mathbf{u}^{(i)})\rangle_{W^{k,2}_c} }{\sum_{i,j=1}^{m}b_i b_j\langle  \mathbf{f}(\mathbf{u}^{(i)}), \mathbf{f}(\mathbf{u}^{(j)})\rangle_{W^{k,2}_c} }, \ \ \ \text{if}\ \sum_{i,j=1}^{m}b_i b_j\langle  \mathbf{f}(\mathbf{u}^{(i)}), \mathbf{f}(\mathbf{u}^{(j)})\rangle_{W^{k,2}_c}  \neq 0,  \\
    1, \ \ \ \ \ \ \ \ \ \ \ \ \ \ \ \ \ \ \ \ \ \ \ \ \ \ \ \ \ \text{if} \  \sum_{i,j=1}^{m}b_i b_j\langle  \mathbf{f}(\mathbf{u}^{(i)}), \mathbf{f}(\mathbf{u}^{(j)})\rangle_{W^{k,2}_c}  = 0.
\end{cases}
\label{eq:RRK_explicit}
\end{equation}

\begin{remark}
    In regards to the existence of a solution, as stated in~\cite[Lemma 2.1]{ketcheson2019relaxation}, regardless of the chosen norm to conserve energy, if $\sum_{i,j=1}^m b_i a_{ij} \ge 0$ and $\gamma^n$ is defined as~(\ref{eq:RRK_explicit}), then $\gamma^n  \ge 0$ for sufficiently small $\Delta t \ge 0$.
    \label{remark_on_existence_RRK}
\end{remark}

\begin{remark}
    Since the Runge-Kutta coefficients $(a,b)$ are nonnegative, then the relaxation method is \textit{monotonicity} preserving if $\Delta t$ is chosen such that $\gamma^n \ge 0$ from (\ref{eq:RRK_explicit}) with inner-products computed in the broken Sobolev norm.
\end{remark}

We restate a crucial lemma from Ketcheson~\cite[Lemma 2.8]{ketcheson2019relaxation} for completeness. Readers are advised to refer to the referenced article for a complete proof.  
\begin{lemma}~\cite[Lemma 2.8]{ketcheson2019relaxation}
    Let $a_{ij}, b_j$ denote the coefficients of an RK method of order $p$, let $f$ be a sufficiently smooth function, and let $\gamma^n$ be defined as~(\ref{eq:RRK_explicit}) and satisfy conditions of Remark~\ref{remark_on_existence_RRK}. Then
    \begin{displaymath}
        \gamma^n = 1 + {\cal O}(\Delta t^{p-1}).
    \end{displaymath}
    \label{lemma_fdnsfr_accuracy}
\end{lemma}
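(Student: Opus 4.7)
The plan is to reduce the statement to an estimate on the local residual of the underlying order-$p$ Runge-Kutta step applied to the numerical entropy balance (\ref{eq:rrk-root}). Write, for brevity,
\[
I_1 = \sum_{i=1}^s b_i\langle \mathbf{u}^{(i)},\mathbf{f}(\mathbf{u}^{(i)})\rangle_{W^{k,2}_c},\quad I_2 = \sum_{i,j=1}^s b_i a_{ij}\langle \mathbf{f}(\mathbf{u}^{(j)}),\mathbf{f}(\mathbf{u}^{(i)})\rangle_{W^{k,2}_c},\quad I_3 = \sum_{i,j=1}^s b_i b_j\langle \mathbf{f}(\mathbf{u}^{(i)}),\mathbf{f}(\mathbf{u}^{(j)})\rangle_{W^{k,2}_c}.
\]
Equation (\ref{eq:RRK_explicit}) then reads $\gamma^n = 2I_2/I_3$, so
\[
\gamma^n - 1 = \frac{2I_2 - I_3}{I_3}.
\]
The task is therefore to (i) show the numerator is $\mathcal{O}(\Delta t^{p-1})$ and (ii) show the denominator is bounded below by a positive constant.

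For step (i), I would substitute $\gamma=1$ into the expanded balance (\ref{eq:rrk-root-expanded}) and divide by two, which gives
\[
\eta(\mathbf{u}^{n+1}) - \eta(\mathbf{u}^n) - \Delta t\, I_1 \;=\; \tfrac{\Delta t^2}{2}\bigl(I_3 - 2I_2\bigr).
\]
The left-hand side is precisely the residual of (\ref{eq:rrk-root}) evaluated at $\gamma = 1$, so it suffices to prove this residual is $\mathcal{O}(\Delta t^{p+1})$. Two classical Runge-Kutta facts apply: by order-$p$ consistency, $\mathbf{u}^{n+1} = \mathbf{u}(t^{n+1}) + \mathcal{O}(\Delta t^{p+1})$, and smoothness of $\eta$ then yields $\eta(\mathbf{u}^{n+1}) - \eta(\mathbf{u}^n) = \eta(\mathbf{u}(t^{n+1})) - \eta(\mathbf{u}(t^n)) + \mathcal{O}(\Delta t^{p+1})$; and the weighted stage sum $\Delta t\, I_1$ reproduces $\int_{t^n}^{t^{n+1}} \langle \mathbf{u},\mathbf{f}\rangle_{W^{k,2}_c}\, dt$ to order $\Delta t^{p+1}$, because the Butcher weights $b_i$ (together with the abscissas implicit in the stage equations) define a quadrature of order at least $p$, and each $\mathbf{u}^{(i)}$ differs from $\mathbf{u}(t^n + c_i \Delta t)$ by $\mathcal{O}(\Delta t^{p})$. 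Since the exact entropy change equals $\int_{t^n}^{t^{n+1}} \langle \mathbf{u}, \mathbf{f}\rangle_{W^{k,2}_c}\,dt$, subtracting the two estimates gives $\tfrac{\Delta t^2}{2}(I_3 - 2I_2) = \mathcal{O}(\Delta t^{p+1})$, hence $2I_2 - I_3 = \mathcal{O}(\Delta t^{p-1})$.

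For step (ii), Taylor expanding each stage gives $\mathbf{f}(\mathbf{u}^{(i)}) = \mathbf{f}(\mathbf{u}^n) + \mathcal{O}(\Delta t)$, and using the consistency condition $\sum_i b_i = 1$,
\[
I_3 \;=\; \bigl\langle \mathbf{f}(\mathbf{u}^n),\mathbf{f}(\mathbf{u}^n) \bigr\rangle_{W^{k,2}_c} + \mathcal{O}(\Delta t),
\]
which is bounded below by a positive constant for all sufficiently small $\Delta t$ whenever $\mathbf{f}(\mathbf{u}^n) \neq \mathbf{0}$; the degenerate case is absorbed into the second branch of (\ref{eq:RRK_explicit}), where $\gamma^n = 1$ exactly and the statement is trivial. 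Combining (i) and (ii),
\[
\gamma^n - 1 \;=\; \frac{\mathcal{O}(\Delta t^{p-1})}{\bigl\langle \mathbf{f}(\mathbf{u}^n),\mathbf{f}(\mathbf{u}^n) \bigr\rangle_{W^{k,2}_c} + \mathcal{O}(\Delta t)} \;=\; \mathcal{O}(\Delta t^{p-1}),
\]
which closes the argument.

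The main obstacle is rigorously justifying the order-$(p+1)$ accuracy of the weighted stage-sum quadrature in step (i): it is slightly subtler than the $\mathbf{u}^{n+1}$ accuracy because the $b_i$-weighted sum of smooth stage values must be related to the exact time integral, not just to $\eta(\mathbf{u}^{n+1})$. I would make this precise by Taylor expanding the integrand $\langle \mathbf{u}(t),\mathbf{f}(\mathbf{u}(t))\rangle_{W^{k,2}_c}$ about $t^n$, then invoking the standard RK order conditions on the $b_i$ together with the order-$p$ stage-value bound, exactly as in the proof of Ketcheson's Lemma 2.8. Once this is in place, the rest of the argument is purely algebraic.
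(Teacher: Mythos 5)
Your decomposition $\gamma^n-1=(2I_2-I_3)/I_3$, the identification of $\tfrac{\Delta t^2}{2}(I_3-2I_2)$ with the residual of~(\ref{eq:rrk-root}) at $\gamma=1$, and the lower bound $I_3=\langle\mathbf{f}(\mathbf{u}^n),\mathbf{f}(\mathbf{u}^n)\rangle_{W^{k,2}_c}+\mathcal{O}(\Delta t)$ are exactly the route taken in the proof the paper cites (Ketcheson's Lemma~2.8); the paper itself only restates the lemma and defers to that reference, so in substance you have reconstructed the intended argument, and the inner product being the broken Sobolev one rather than $L^2$ changes nothing since the argument only uses bilinearity and symmetry.

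One intermediate justification in step~(i) is wrong as stated, though repairable. You claim each stage satisfies $\mathbf{u}^{(i)}=\mathbf{u}(t^n+c_i\Delta t)+\mathcal{O}(\Delta t^{p})$; explicit RK methods have stage order at most two, so for the schemes used here (SSPRK3, RK4) the stages are only $\mathcal{O}(\Delta t^{2})$-accurate and there is no ``order-$p$ stage-value bound'' to invoke. The correct mechanism for showing $\Delta t\,I_1=\eta(\mathbf{u}(t^{n+1}))-\eta(\mathbf{u}(t^n))+\mathcal{O}(\Delta t^{p+1})$ is to adjoin $\eta$ as an extra component of the ODE with $\tfrac{d\eta}{dt}=\langle\mathbf{u},\mathbf{f}(\mathbf{u})\rangle_{W^{k,2}_c}$: the quantity $\eta(\mathbf{u}^n)+\Delta t\,I_1$ is then precisely the order-$p$ RK update of that component, so its one-step error is $\mathcal{O}(\Delta t^{p+1})$ by the order conditions alone, with no appeal to stage accuracy. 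With that substitution your step~(i) closes, and the rest of the argument is sound; note only that the nondegeneracy needed in step~(ii) is $\mathbf{f}(\mathbf{u}^n)\neq\mathbf{0}$ (not merely $I_3\neq 0$, which is what the second branch of~(\ref{eq:RRK_explicit}) covers), matching the implicit hypothesis of the cited lemma and of Remark~\ref{remark_on_existence_RRK}.
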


We will illustrate Lemma~\ref{lemma_fdnsfr_accuracy} in the following subsection, where we will show the convergence of $\gamma^n$ towards 1 at the expected order.

\subsection{Results using the inviscid Burgers' equation}\label{sec:BurgersInviscid}

We first use the 1D inviscid Burgers' equation in order to evaluate the convergence behavior of the algebraic version of RRK in time step refinement studies. 
We define the inviscid Burgers' equation using the general convection-diffusion equation (\ref{eq:3d_cons_law}):
\begin{equation}
    \begin{aligned}
        &f_c(u) = \frac{u^2}{2}, \ \ f_v(u, \nabla u) = 0\\
        &u_0(x) = \sin(\pi x) \\
        &x \text{ periodic on } [0,2],\ \ \ t \in [0,0.3],
    \end{aligned}
    \label{eq:burg-inv}
\end{equation}
\noindent which has an inner-product numerical entropy function $\eta := \frac{1}{2} \langle u,u \rangle_{W^{k,2}_c}$. 
We use the following choice of energy-conserving numerical flux~\cite{tadmor2003entropy},
\begin{equation}
    f^*_c = \frac{1}{6} \left(
        u^- u^- + u^- u^+ + u^+ u^+
    \right).
    \label{eq:burgers-flux}
\end{equation}
\noindent where $u^-$ and $u^+$ are the solution states on the interior and exterior sides of an element face. This choice of numerical flux conserves energy when suitable split forms are used; see~\cite[section 4.2]{gassner2013skew} or~\cite[section 3.1.2]{cicchino2022nonlinearly}. 
The numerical flux does not add any dissipation in the form of upwinding.

We use a fine grid with $32$ elements and $p=4$, such that temporal error dominates.
The tests use equally-spaced elements with collocated Gauss-Lobatto-Legendre (GLL) solution and flux nodes. 
We use three explicit RK methods, abbreviated as follows:
\begin{itemize}
    \item RK2: Heun's method, a 2-stage, 2nd-order explicit method, which is also known in literature as second-order strong-stability preserving,
    \item SSPRK3: The strong-stability preserving 3rd-order, 3-stage method of Shu and Osher~\cite{shu1988efficient}, and
    \item RK4: The classical fourth-order, four-stage method.
\end{itemize}

We use the algebraic form of RRK~\cite{ketcheson2019relaxation} as described in section \ref{sec:RRK-implementation-scalarc^{k,2}}. 
We evaluate convergence from a large time step size, performing six refinements by a factor of two.
{\color{black} For the inviscid Burgers' test case, we defined the largest time step size as the largest even divisor of the end time that ensures asymptotic convergence behavior.}
In Fig.~\ref{fig:Burgers-convergence}(left) we compare the $L^2$ norm of the solution to a reference solution using the same spatial discretization and a very small time step.
We observe that both the semi-discrete and fully-discrete NSFR schemes yield similar error convergence behavior and we obtain the expected order for each of the temporal integration schemes. 
The average relaxation parameter shown in Fig.~\ref{fig:Burgers-convergence}(center) converges at the expected order based on Lemma~\ref{lemma_fdnsfr_accuracy}: both the SSPRK3 and RK4 methods converge at $p-1$; while, according to Ketcheson~\cite{ketcheson2019relaxation}, symmetry causes the RK2 method to converge at $p$.
The rightmost plot in Fig.~\ref{fig:Burgers-convergence} verifies the entropy-conserving implementation of the FD-NSFR scheme. 
The FD-NSFR schemes conserve energy to machine precision, while the SD-NSFR schemes have a change in energy, which converges at the expected order.

\begin{figure}[h!]
    \centering
    \includegraphics[height=2.5 in]{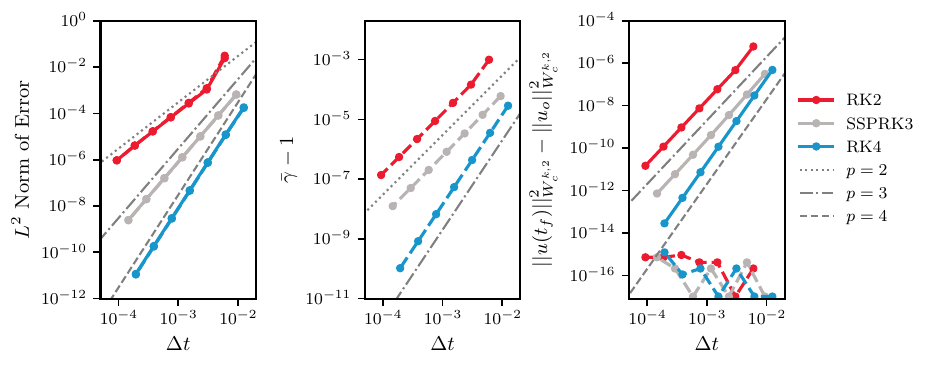}
    \caption{Convergence plots for the inviscid Burgers' test case using $c_{DG}$. The solid lines use SD-NSFR, and the dashed lines use FD-NSFR. Left: convergence of $L^2$ solution error at the end time compared to a calculation using a very  small time step. Center: convergence of the average relaxation parameter to $1$ in the FD-NSFR cases. Right: energy change in the broken Sobolev $W^{k,2}_c$ norm at the end time relative to the initial condition. Machine zero is shown as $10^{-17}$ such that it can be plotted on a logarithmic scale.}
    \label{fig:Burgers-convergence}
\end{figure}

Next, we use the 1D inviscid Burgers' test case to verify the FD-NSFR implementation when changing the correction parameter. 
Figure \ref{fig:Burgers_FR} demonstrate similar convergence behaviour between all FR variations. While results are presented only for SSPRK3, a similar trend can be seen for other RK schemes.
{\color{black}The primary benefit of FR is the ability to increase the time step size~\cite{huynh_flux_2007}, as the value of $c$ (Equation~\ref{defineKmatrix}) is increased while maintaining the order of the spatial discretization.
When $c_{DG}$ is used, at least $32$ time steps must be taken in order to converge asymptotically. That figure is $30$ and $29$ time steps for $c_{Hu}$ and $c_+$, respectively.}

\begin{figure}[h!]
    \centering
    \includegraphics[height=2.5 in]{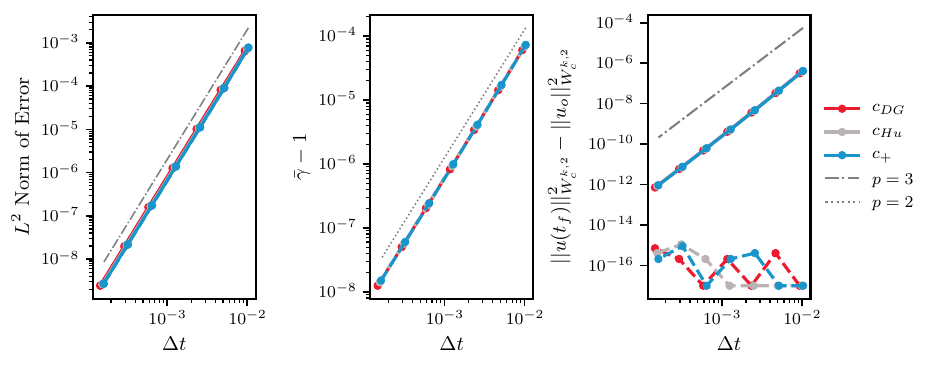}
    \caption{Error convergence for Burgers' equation using various FR correction parameters. The solid lines use SD-NSFR, and the dashed lines use FD-NSFR.  Left: $L^2$ norm of error at the end of the simulation, compared to a calculation using a very small time step. Center: convergence of the average relaxation parameter to $1$ in the FD-NSFR cases. Right: Energy change in the broken Sobolev $W^{k,2}_c$ norm at the end of the calculation. Machine zero is shown as $10^{-17}$ such that it can be plotted on a logarithmic scale. }
    \label{fig:Burgers_FR}
\end{figure}

We present a solution using SSPRK3 and $\Delta t = 0.005$ in Figure \ref{fig:Burgers-evolution} using the $c_{DG}$ correction parameter.
We use the same $p=4$, $32$-element grid as in the temporal convergence study.
Energy is evidently conserved to machine precision by the FD version, with any changes being in increments of machine precision. On the other hand, the SD version is not able to conserve energy. The relaxation parameter is close to $1$ for the entire solution time, though it drops as the shock begins to form at $t=0.3$. 
After the shock forms, the entropy-conserving flux no longer accurately captures the solution, rather it results in a dispersed shock. Therefore the relaxation parameter will deviate as expected from $1$.

\begin{figure}[h!]
\centering
\includegraphics[height=2.5 in]{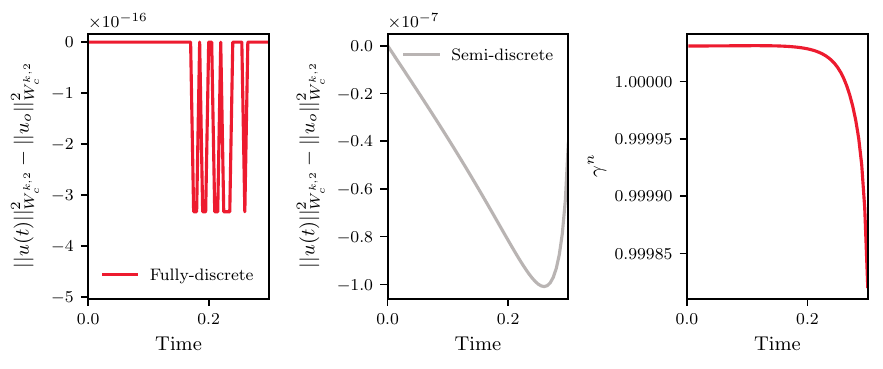}
    \caption{Evolution of energy change using the FD-NSFR (left) and SD-NSFR (center) schemes and $\Delta t = 0.005$. The norms are evaluated in the broken Sobolev norm. Right: Evolution of relaxation parameter $\gamma^n$ using the fully-discrete scheme and $\Delta t = 0.005$.}
    \label{fig:Burgers-evolution}
\end{figure}

Figure \ref{fig:Burgers-M-vs-MpK} demonstrates that NSFR with each correction parameter is indeed entropy conserving in the ${W^{k,2}_c}$ norm, while non-zero parameters $c_{Hu}$ and $c_+$ result in a dissipative energy evolution in the $L^2$ norm as the solution approaches the formation of a discontinuity. 

\begin{figure}[h!]
    \centering\includegraphics[height = 2.5 in]{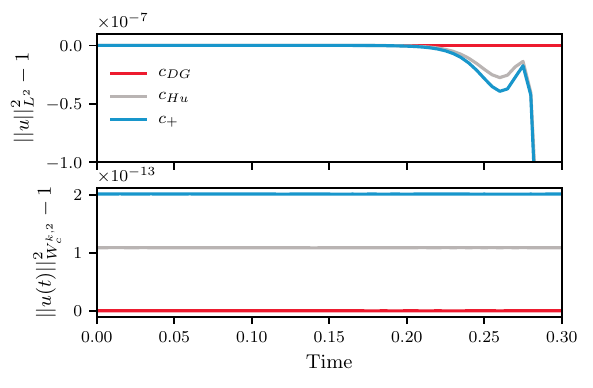}
    \caption{Evolution of energy in the $L^2$ (top) and $W^{k,2}_c$ (bottom) norms for the Burgers' test case using FD-NSFR. Energy is conserved exactly in the $W^{k,2}_c$ norm, but is only conserved in the $L^2$ norm for $c_{DG}$, where $\mathcal{K}=\mathbf{0}$.}
    \label{fig:Burgers-M-vs-MpK}
\end{figure}

\subsection{Results using the viscous Burgers' equation}
We present results using the viscous Burgers' equation to demonstrate that the FD-NSFR scheme effectively follows a dissipative solution. 
We adopted the test case of~\cite[Section 4.4.2]{ketcheson2019relaxation}, but used physical viscous dissipation rather than numerical upwinding. 
We define the PDE and initial condition as
\begin{equation}
    \begin{aligned}
        &f_c(u) = \frac{u^2}{2}, \ \ f_v(u, \nabla u) =  1E-4 \nabla u\\
        &u_0(x) = \exp(-30 (x) ^2) \\
        &x \text{ periodic on } [0, 2],\ \ \ t \in [0,0.2]
    \end{aligned}
    \label{eq:burg-visc}
\end{equation}
The initial condition is smooth and a viscous shock develops during the solution.
We use a grid with 64 evenly-spaced elements with $p=1$ (to mimic the case in~\cite{ketcheson2019relaxation}) with uncollocated GLL flux nodes and GL solution nodes. 
The same two-point flux as Eq.~(\ref{eq:burgers-flux}) is used. We discretize the viscous and solution numerical fluxes with the central viscous approach per Eq.~(\ref{eq:central-viscous}), adding no additional dissipation other than physical viscosity. 
The RK method is set to SSPRK3 and we use the $c_{DG}$ correction parameter. We generate a reference solution using SD-NSFR and a very small time step size. We compare FD-NSFR and SD-NSFR at a larger time step size to the reference solution.
Figure \ref{fig:VBurgers} demonstrates that FD-NSFR follows the reference solution of a viscous shock very well when the viscosity coefficient is small. On the other hand, SD-NSFR results in temporal energy change. 
We also compare a result using the $c_{DG}$ ESFR scheme, which is comparatively more dissipative due to the added Lax-Friedrichs upwinding for stability.

\begin{figure}
    \centering
    \includegraphics[width=0.48\linewidth]{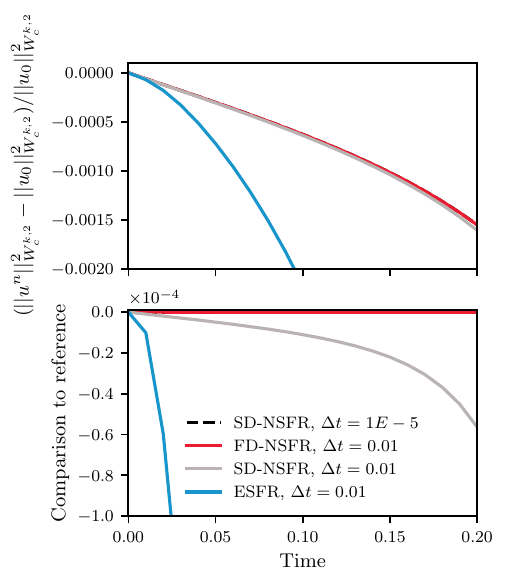}
    \includegraphics[width=0.48\linewidth]{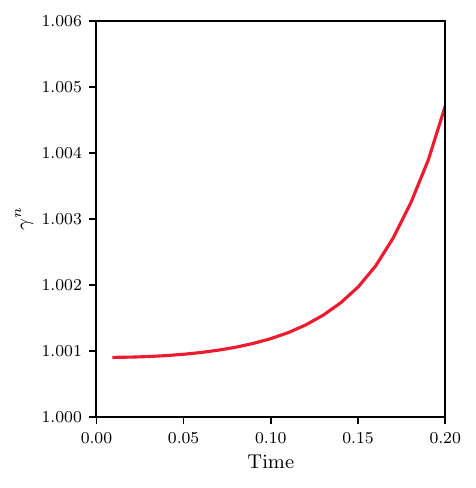}
    \caption{Energy dissipation in the viscous Burgers' test case. The bottom figure subtracts the normalized energy of the reference solution from the indicated large-time-step solution. The left figure shows the relaxation parameter evolution for the FD-NSFR case.}
    \label{fig:VBurgers}
\end{figure}

\newpage
\section{FD-NSFR Scheme for Vector-Valued Conservation Laws} \label{sec:vector-NSFR}

\subsection{Governing equations} \label{sec:NS-euler-governing-eqns}

We consider the compressible Euler and Navier-Stokes equations for a three-dimensional domain. We express the governing law in conservation form [Eq.(~\ref{eq:3d_cons_law})], where the state, and convection and viscous fluxes can be written as 
\begin{equation}
\mathbf{u} = \begin{bmatrix} \rho \\ \rho q_1 \\ \rho q_2 \\ \rho q_3 \\ E \end{bmatrix},   \hspace{0.6cm} 
\mathbf{f}_{c_{i}}\left(\mathbf{u}\right) = \begin{bmatrix}\rho {q}_{i}\\ \rho {q}_{1}{q}_{i} + p\delta_{1i}\\ \rho {q}_{2}{q}_{i} + p\delta_{2i}\\ \rho {q}_{3}{q}_{i} + p\delta_{3i}\\ \left(E + p\right){q}_{i}\end{bmatrix}, \hspace{0.6cm} 
\mathbf{f}_{v_{i}}\left(\mathbf{u},\nabla \mathbf{u}\right) = \begin{bmatrix}0 \\ \tau_{1i} \\ \tau_{2i} \\ \tau_{3i} \\ \tau_{ij}{q}_{j}-\phi_{qi}\end{bmatrix},
\end{equation}
where $i=1,2,3$ denotes the Cartesian components of $\mathbf{x}$ respectively, while $q_i$ denotes the $i$-th component of the velocity vector, $\mathbf{q} = [q_1, q_2, q_3]$. $E$ is the total energy, $E = p/(\gamma_{\text{gas}}-1) + (\rho/2)(q_1^2+q_2^2+q_3^2)$. When we solve the Euler equations, the viscous flux is set $\mathbf{f}_v = \mathbf{0}$. For the Navier-Stokes equations, we define the viscous stress tensor $\bm{\tau}$ as 
\begin{equation}
    \bm{\tau} = \mu\left[2\mathbf{S}+\lambda\left(\nabla\boldsymbol{\cdot}\mathbf{q}\right)\mathbf{I}\right],\hspace{0.6cm}\mathbf{S}=\frac{1}{2}\left[\nabla\mathbf{q}+\left(\nabla\mathbf{v}\right)^{T}\right],
\end{equation}
where $\mathbf{S}$ is the strain-rate tensor, $\mathbf{I}$ is the identity matrix, the bulk viscosity $\lambda=-\frac{2}{3}\mu$ by Stokes' hypothesis, and $\mu$ is the dynamic viscosity. In this work, $\mu=\mu(T)$ is determined by using Sutherland's law \cite{sutherland1893lii}. The heat-flux vector is $\bm{\phi_q}=-\kappa\nabla T$, where $T$ is the temperature, and $\kappa$ is the thermal conductivity:
\begin{equation}
    \kappa = \frac{c_{p}\mu}{\text{Pr}},
\end{equation}
with $\text{Pr}$ as the Prandtl number, set to the standard value of $0.71$ for air. To close the system of equations, the ideal gas law is used,
\begin{equation}
    p = \left(\gamma_{\text{gas}}-1\right)\left(E-\frac{1}{2}\rho \ \mathbf{q}\boldsymbol{\cdot}\mathbf{q}\right),
\end{equation}
\noindent with $\gamma_{\text{gas}}=1.4$ for air. The implemented version of the equation is in non-dimensional form.

The Navier-Stokes equations admit a convex entropy function $S(\mathbf{u}) = -\rho s$, where $s = \log(p \rho^{-\gamma_{\text{gas}}})$ defines the physical entropy for both the Euler and Navier-Stokes equations, following the analysis of Hughes~\cite{hughes1986new}. The entropy variables are defined as the derivative of the convex entropy function $S$ with respect to the conservative variables $\mathbf{u}$, expressed as 
\begin{equation}
    \mathbf{v(\mathbf{u})} = \frac{\gamma_{\text{gas}}-1}{p}\begin{bmatrix} 
    \frac{p}{\gamma_{\text{gas}}-1} ( \gamma_{\text{gas}} + 1 - s ) - E\\ 
    \rho q_1 \\ 
    \rho q_2 \\ 
    \rho q_3 \\ 
    -\rho 
    \end{bmatrix}, 
\end{equation}
where the convexity of $S(\mathbf{u})$ ensures that the mapping  $\mathbf{u(\mathbf{v})}$ is invertible.

\subsection{Relaxation Runge-Kutta for General Convex Quantities} \label{sec:RRK-root}

The root equation (\ref{eq:RRK_explicit}) as presented in section \ref{sec:RRK-overview} can be solved analytically when energy is the numerical entropy variable~\cite{ketcheson2019relaxation}. 
However, the Euler and Navier-Stokes equations do not have a numerical entropy function that can be written in an inner-product form. 
Rather, it is a convex nonlinear function.
In such cases, we adopt the approach of~\cite{ranocha2020relaxation}.
If the numerical entropy is a general convex function, we implement Eq.~(\ref{eq:rrk-root}) as a root-finding problem
\begin{equation}
    r(\gamma^n)= \eta(\mathbf{u}^{n+1}_\gamma) - \eta(\mathbf{u}^{n}) - \gamma^n \Delta t \sum_{i=1}^s b_i \langle \textbf{v}^{(i)}, \frac{d\textbf{u}^{(i)}}{dt} \rangle_{L^2},
    \label{eq:root-implementation}
\end{equation}
\noindent where the total entropy over the domain is evaluated by integrating at solution nodes and summing over the $N$ elements in the domain, 
\begin{equation}
    \eta(\mathbf{u}) = \sum_{m=1}^N \mathbf{1} \mathcal{W} \mathcal{J}_m \mathbf{S}_m^T,
    \label{eq:eta-L2}
\end{equation}
where $\mathbf{S}_m$ is a vector holding the numerical entropy function computed at each quadrature node in the $m$-th element.
To calculate the entropy change estimate, we use an inner product between the entropy and conservative variables in the $L^2$ norm, i.e.,
\begin{equation}
    \langle {\mathbf{v}}^{(i)}, \frac{d\mathbf{u}^{(i)}}{dt} \rangle_{L^2} = \hat{\mathbf{v}}^{(i)\ T} \mathcal{M} \frac{d\hat{\mathbf{u}}^{(i)}}{dt}
\end{equation}
\noindent which uses the unmodified mass matrix. 
The $L^2$ norm is used to ensure consistency with the integrated numerical entropy terms.

We choose to use the secant method in order to numerically find the root of Eq.~(\ref{eq:root-implementation}), aligning with the recommendations of Al Jahdali \textit{et al.}~\cite{aljahdali2022on} and Rogowski \textit{et al.} ~\cite{rogowski2022performance}. 
{\color{black}In some cases, the secant method was unsuccessful in finding a root within the iteration limit, largely due to subtractive cancellation errors.}
We have implemented Algorithm~\ref{alg:root} to find $\gamma^n$, which adds a bisection method solver as a safeguard to prevent the root-solver from failing if the primary secant method solver fails.

\begin{algorithm}
    \caption{Root-finding algorithm.}
    \label{alg:root}
    \begin{algorithmic}
    \Function{root eq.}{$\gamma$}
        \State \Return $\eta(\mathbf{u}^n + \gamma \mathbf{d}) - \eta(\mathbf{u}^n) - \mathbf{e}_{L^2}$  using pre-computed search direction $\mathbf{d}$ and entropy change estimate $\mathbf{e}_{L^2}$ \Comment{Eq.~(\ref{eq:root-implementation})}
    \EndFunction
    \State $tolerance \gets 5E-10; \ \ iterLimit \gets 100$
    \State $\gamma^k \gets \gamma^n + 1E-5$ and $\gamma^{k-1} \gets \gamma^n - 1E-5$
    \State $r_k \gets $ \Call{root eq.}{$\gamma_k$}$;\ \  r_{k-1} \gets $ \Call{root eq.}{$\gamma_{k-1}$}
    \State residual $\gets 1$; $iterCounter \gets 0$
    \While{$residual > tolerance$ and $iterCounter < iterLimit$} \Comment{Secant method loop}
        \State $\gamma_{k+1} \gets \gamma_k - r_k \frac{\gamma_k - \gamma_{k-1}}{r_k-r_{k-1}}$ \Comment{Secant method}
        \If{$\gamma^{k+1} < 0.5$ or $\gamma^{k+1} > 1.5$ or $\gamma^{k+1}$ is NaN}
            \State $\gamma^k = 1 + 10^{-5}$ and $\gamma^{k-1} = 1 - 10^{-5}$ \Comment{reinit from $1$ if current value is far from $1$}
        \EndIf
        \State $\gamma_{k-1} \gets \gamma_k$ and $\gamma_k \gets \gamma_{k+1}$
        \State $r_{k-1} \gets r_k$ and  $r_k \gets$ \Call{root eq.}{$\gamma_k$}
        \State $residual \gets |\gamma_k - \gamma_{k-1}|$
        \State $iterCounter++$
    \EndWhile   
    \If{$iterCounter == iterLimit$ and $residual > tolerance$} \Comment{Bisection method loop}
    \State $\gamma^k \gets \gamma^n + 0.1$ and $\gamma^{k-1} \gets \gamma^n - 0.1$, and reset $iterationCounter = 0$.
    \While{$residual > tolerance$ and $iterationCounter < iterationLimit$}
        \IIf{there is no root in the interval} increase interval size by 0.1\EndIIf
        \State update $\gamma^{k+1}$ with bisection method
        \State $\gamma^k \gets \gamma^{k+1}$ and $\gamma^{k-1} \gets \gamma^k$ 
        \State $residual \gets |\gamma^{k+1} - \gamma^k|$
        \State $iterationCounter++$
    \EndWhile
    \EndIf
    \IIf{$residual > tolerance$} abort \EndIIf
    \State $\gamma^{n+1} \gets \gamma^k$
\end{algorithmic}
\end{algorithm}

The implementation as described by Eq. (\ref{eq:root-implementation}) is similar to that proposed by Ranocha \textit{et al.}~\cite{ranocha2020relaxation}, and the proofs therein apply immediately when the $c_{DG}$ correction parameter is applied and the discretization is stable in the $L^2$ norm. 

\begin{remark}
    However, entropy stability proofs for the NSFR scheme are in the $W_c^{k,2}$ norm~\cite{cicchino2021new,cicchino2022nonlinearly,cicchino2022provably,cicchino2023discretely}.
    In section~\ref{sec:scalar}, we calculated the integrated numerical entropy and entropy change estimate consistently in the $W_c^{k,2}$ norm through the definition of the inner product.
    However, this strategy does not apply to general convex numerical entropy functions since it is not possible to evaluate the integrated numerical entropy in the $W^{k,2}_c$ norm.
    Therefore, we solve for $\gamma^n$ in the $L^2$ norm according to Eq. (\ref{eq:root-implementation}) for any $c$ and the impact of this choice is elaborated as follows.
\end{remark}

\begin{lemma} \label{lem:integrate-num-entr-L2}
{\color{black} The convex entropy function $S(\mathbf{u})$ for the Euler or Navier-Stokes equations is not in inner-product form.}
\end{lemma}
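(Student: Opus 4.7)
\medskip

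\noindent\textbf{Proof plan.} The plan is to argue by contradiction, leveraging the fact that an inner-product form imposes a very restrictive (quadratic/bilinear) algebraic structure on the entropy function, whereas $S(\mathbf{u}) = -\rho s$ with $s = \log(p\rho^{-\gamma_{\text{gas}}})$ is transcendental in the conservative variables.

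\medskip

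\noindent First, I would make precise what ``inner-product form'' means here, since that terminology was used informally in Section~\ref{sec:scalar} where $\eta(\mathbf{u}) = \tfrac{1}{2}\langle \mathbf{u}, \mathbf{u}\rangle$ was the scalar example. Concretely, I would say that a pointwise convex entropy $S$ is in inner-product form if there exists a (possibly weighted) symmetric positive-definite bilinear form $B(\cdot,\cdot)$ on $\mathbb{R}^{N_s}$ such that $S(\mathbf{u}) = \tfrac{1}{2}B(\mathbf{u},\mathbf{u})$. This is the property that made the algebraic relaxation solve in equation~(\ref{eq:RRK_explicit}) possible: the expansion of $\eta(\mathbf{u}^{n+1}_\gamma) - \eta(\mathbf{u}^n)$ terminated at second order in $\gamma^n \Delta t$ precisely because $\eta$ was quadratic.

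\medskip

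\noindent Next, I would substitute the Euler/Navier-Stokes expressions to show that $S$ is manifestly not quadratic. Using $p = (\gamma_{\text{gas}}-1)\bigl(E - \tfrac{1}{2}\rho\, \mathbf{q}\cdot\mathbf{q}\bigr)$ and the momentum $\rho\mathbf{q}$ as a component of $\mathbf{u}$, we can write
\begin{equation*}
S(\mathbf{u}) = -\rho\, \log\!\left( \frac{(\gamma_{\text{gas}}-1)\bigl(E - \tfrac{1}{2\rho}(\rho\mathbf{q})\cdot(\rho\mathbf{q})\bigr)}{\rho^{\gamma_{\text{gas}}}} \right).
\end{equation*}
If $S$ were of the form $\tfrac{1}{2}\mathbf{u}^T A\, \mathbf{u}$ for a constant symmetric $A \in \mathbb{R}^{N_s \times N_s}$, then all third and higher pure partial derivatives of $S$ with respect to the components of $\mathbf{u}$ would vanish identically. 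I would compute any one such derivative---for instance $\partial^3 S / \partial \rho^3$ or $\partial^3 S / \partial E^3$---and show that because of the logarithm it is a nonzero rational function of $(\rho, \rho\mathbf{q}, E)$. This produces the contradiction.

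\medskip

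\noindent The main obstacle is really a definitional one rather than a computational one: one has to pin down the right notion of ``inner-product form'' so the lemma is not vacuous. Once that is fixed, the proof reduces to noticing that no quadratic polynomial in the conservative variables can reproduce $-\rho \log(p\,\rho^{-\gamma_{\text{gas}}})$. As a consequence, the algebraic RRK construction of Section~\ref{sec:RRK-implementation-scalarc^{k,2}} does not apply, motivating the root-finding approach of equation~(\ref{eq:root-implementation}).
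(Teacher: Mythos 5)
Your proposal is correct, but it proves the lemma by a genuinely different route than the paper. You fix the natural definition of ``inner-product form'' ($S(\mathbf{u})=\tfrac12 B(\mathbf{u},\mathbf{u})$ for a constant bilinear form $B$) and argue by contradiction that the third partial derivatives of $S=-\rho\log(p\rho^{-\gamma_{\text{gas}}})$ do not vanish identically --- e.g.\ $\partial^3 S/\partial E^3 = -2\rho(\gamma_{\text{gas}}-1)^3/p^3 \neq 0$ --- whereas any quadratic form has identically zero third derivatives. That is a clean, self-contained, and arguably more rigorous argument than the one in the paper. The paper instead invokes Harten's symmetrization identity $S(\mathbf{u})=\langle \mathbf{v}(\mathbf{u}),\mathbf{u}\rangle - q(\mathbf{v})$, computes the entropy potential $q(\mathbf{v})=-\rho(\gamma_{\text{gas}}-1)$ explicitly, and observes that while the first term is an inner product (and hence can be evaluated in the broken Sobolev $W^{k,2}_c$ norm), the potential term is linear in the state and cannot be. What the paper's decomposition buys --- and what your argument does not deliver --- is the identification of \emph{which} piece of $S$ obstructs evaluation in $W^{k,2}_c$: the term $\langle\mathbf{v},\mathbf{u}\rangle$ is exactly what is later carried into the $\langle\cdot,\cdot\rangle_{\mathcal{K}}$ correction of Definition~\ref{def:NSFR_num_entropy} and Algorithm~\ref{alg:rrkForNavierStokes}, so the constructive form of the paper's proof directly motivates the subsequent algorithm. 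One minor point on your side: positive-definiteness of $B$ is not needed for the third-derivative argument (any constant bilinear form suffices), and your correct observation that quadraticity is what makes the expansion in Eq.~(\ref{eq:rrk-root-expanded}) terminate at second order is precisely the operational content the paper is after.
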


\begin{proof}    
The convex numerical entropy function~\cite[Theorem 1.1]{harten1983symmetric} for the Euler and Navier-Stokes equations is defined as,
\begin{equation}
    S(\mathbf{u}) = \langle {\mathbf{v}}(\mathbf{u}),\mathbf{u} \rangle - q(\mathbf{v}),
    \label{eq:general-numerical-entropy}
\end{equation}
where $S(\mathbf{u})=-\rho s$. Using the definitions for conserved and entropy variables in section~\ref{sec:NS-euler-governing-eqns}, we find that $q(\mathbf{v})=-\rho(\gamma_{\text{gas}}-1)$. 
While the first term of the RHS is in inner-product form and can be evaluated in the $W_c^{k,2}$ norm, the term $q(\mathbf{v})$ cannot be represented as an inner-product. 
\color{black} Hence, it is not possible to integrate Eq.~(\ref{eq:general-numerical-entropy}) in the broken Sobolev $W_c^{k,2}$ norm for the Euler and Navier-Stokes equations. 
\end{proof}

Before proceeding to nonlinear cases, we return to the Burgers' test case in Section~\ref{sec:BurgersInviscid}. 
We stress that it is preferred to use RRK with all terms in the broken Sobolev $W_c^{k,2}$ norm, as was done in Section~\ref{sec:scalar}. 
Using a problem with inner-product numerical entropy allows us to better understand the consequences of controlling the temporal entropy growth in the $L^2$ norm {\color{black} when implementing per Eq.~(\ref{eq:root-implementation})}, while stability for the semi-discrete scheme was established in $W_c^{k,2}$. 

\begin{lemma}
\label{lem:inner-prod-L2}
{\color{black}A semi-discrete entropy conserving scheme in the broken Sobolev $W_c^{k,2}$ norm with a convex entropy function of the form $\eta = 1/2 \langle \mathbf{u}, \mathbf{u} \rangle_{W_c^{k,2}}$ (i.e., energy) leads to energy generation on the order of the RK scheme if temporal energy growth is conserved in $L^2$.
}
\end{lemma}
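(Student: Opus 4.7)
The plan is to mirror the derivation that led to equation~(\ref{eq:rrk-root-expanded}), but this time track the energy change in the broken Sobolev $W_c^{k,2}$ norm rather than in $L^2$, while substituting the value of $\gamma^n$ that was actually chosen by the $L^2$-based RRK root equation~(\ref{eq:root-implementation}). First, I would write the one-step increment
\begin{equation*}
\|\mathbf{u}^{n+1}_{\gamma}\|^2_{W^{k,2}_c} - \|\mathbf{u}^{n}\|^2_{W^{k,2}_c}
= 2\gamma^n \Delta t \sum_{i} b_i \langle \mathbf{u}^{(i)}, \mathbf{f}(\mathbf{u}^{(i)})\rangle_{W^{k,2}_c}
+ \Delta t^2 \sum_{i,j} \bigl[(\gamma^n)^2 b_i b_j - 2\gamma^n b_i a_{ij}\bigr] \langle \mathbf{f}(\mathbf{u}^{(j)}), \mathbf{f}(\mathbf{u}^{(i)})\rangle_{W^{k,2}_c}.
\end{equation*}
By the semi-discrete entropy conservation stated in Remark~\ref{rem:entropy-stab-spatial} (which applies in $W_c^{k,2}$ for scalar energy), the first term on the right vanishes, so the full deviation in the broken Sobolev energy comes solely from the quadratic-in-$\Delta t$ cross terms.

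Next I would use the fact that if $\gamma^n$ were equal to $1$, the coefficient $\Delta t^2[b_i b_j - 2 b_i a_{ij}]$ contracted with $\langle \mathbf{f}^{(j)}, \mathbf{f}^{(i)}\rangle_{W^{k,2}_c}$ is exactly the local truncation error arising when a standard RK method of order $p$ is applied to the quadratic invariant, which is $\mathcal{O}(\Delta t^{p+1})$ per step. Then, since Lemma~\ref{lemma_fdnsfr_accuracy} applied with the $L^2$ inner product gives $\gamma^n_{L^2} = 1 + \mathcal{O}(\Delta t^{p-1})$, the perturbation of the coefficient away from its $\gamma^n = 1$ value is $\mathcal{O}(\Delta t^{p-1})$ multiplied by an $\mathcal{O}(\Delta t^2)$ factor, which is again $\mathcal{O}(\Delta t^{p+1})$. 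Summing these two contributions yields a per-step energy drift of $\mathcal{O}(\Delta t^{p+1})$ in the $W^{k,2}_c$ norm, which accumulates to a global error of order $\mathcal{O}(\Delta t^{p})$ over a fixed time interval, i.e.\ on the order of the RK scheme.

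The main obstacle will be making the statement ``reduces to the standard RK truncation error for the quadratic invariant'' rigorous. Concretely, one must argue that the inner product $\langle \mathbf{f}^{(j)}, \mathbf{f}^{(i)}\rangle_{W^{k,2}_c}$, being a fixed bilinear form induced by the symmetric positive definite operator $\mathcal{M} + \mathcal{K}$, behaves under RK composition exactly as the $L^2$-inner product does in Ketcheson's original accuracy proof. Once that equivalence is in hand — which is immediate because the RK accuracy bounds depend only on the bilinearity and boundedness of the inner product rather than on any particular weight — the rest is a routine Taylor expansion of $\gamma^n$ around $1$ and substitution into the expression above.

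I would conclude by contrasting this with Section~\ref{sec:scalar}: when the RRK root equation is solved in the same norm in which the semidiscretization is stable, the $\Delta t^2$ terms are killed exactly and the scheme is fully-discretely energy conserving; when the norms are mismatched, as is forced by Lemma~\ref{lem:integrate-num-entr-L2} for the Euler and Navier--Stokes systems, only a weaker statement — consistency at the RK order — is possible. This motivates why the $W^{k,2}_c$ fully-discrete entropy conservation guarantee degenerates to the $L^2$ one for $c \neq c_{DG}$ in the vector-valued setting.
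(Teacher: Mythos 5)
Your proposal is correct and starts from the same one-step expansion as the paper, but it closes the argument by a different route. The paper's proof makes the splitting $\langle\mathbf{a},\mathbf{b}\rangle_{W_c^{k,2}} = \langle\mathbf{a},\mathbf{b}\rangle_{L^2} + \langle\mathbf{a},\mathbf{b}\rangle_{\mathcal{K}}$ explicit inside the quadratic remainder (Eq.~(\ref{eq:rrk-root-expanded-norms})) and observes that the $L^2$-based root equation~(\ref{eq:root-implementation}) cancels exactly the two $L^2$ terms, so the surviving energy generation is precisely the pair of $\mathcal{K}$-seminorm terms; the order claim is then simply asserted for those terms. You instead keep the remainder in the full $W_c^{k,2}$ inner product and bound it by combining the standard fact that a $p$-th order RK method perturbs a conserved quadratic invariant by $\mathcal{O}(\Delta t^{p+1})$ per step with Lemma~\ref{lemma_fdnsfr_accuracy} applied to the $L^2$-determined $\gamma^n$. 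Your route gives a cleaner quantitative justification of the phrase ``on the order of the RK scheme'' than the paper provides, but note that your bound holds for any $\gamma^n = 1+\mathcal{O}(\Delta t^{p-1})$, including $\gamma^n=1$ (plain RK), and therefore does not by itself isolate what the $L^2$ relaxation actually accomplishes: the exact elimination of the $L^2$ portion, leaving only the seminorm contribution. That structural identification is what the paper relies on afterward (Definition~\ref{def:NSFR_num_entropy} and the comparison in Figure~\ref{fig:RRK-Burg-rootvsalg}), so the $L^2$/$\mathcal{K}$ decomposition you relegate to your closing paragraph should be folded into the main argument rather than left as a remark.
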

\begin{proof}
If we apply the root-finding approach for the Burgers' equation with a convex entropy function of the form $\eta = 1/2 \langle \mathbf{u}, \mathbf{u} \rangle_{W_c^{k,2}}$, then equation~\ref{eq:rrk-root} can be expressed as 
\begin{eqnarray}
    r(\gamma) &=& 
        \vert\vert\mathbf{u}^{n+1}_\gamma \vert\vert^2_{W^{k,2}_c} - \vert\vert\mathbf{u}^n \vert\vert^2_{W^{k,2}_c} 
        - 2\gamma^n \Delta t  \sum_{i=1}^s b_i \langle \mathbf{u}^{(i)}, \mathbf{f}(\mathbf{u}^{(i)}) \rangle_{W^{k,2}_c}\nonumber \\
        &=& -2\gamma^n \Delta t^2  \sum_{i,j=1}^s b_i a_{ij}\langle \mathbf{f}(\mathbf{u}^{(j)}), \mathbf{f}(\mathbf{u}^{(i)}) \rangle_{L^2}
    + (\gamma^n)^2 \Delta t^2  \sum_{i,j=1}^s b_i b_{j}\langle \mathbf{f}(\mathbf{u}^{(j)}), \mathbf{f}(\mathbf{u}^{(i)}) \rangle_{L^2} \nonumber \\
    && -2\gamma^n \Delta t^2  \sum_{i,j=1}^s b_i a_{ij}\langle \mathbf{f}(\mathbf{u}^{(j)}), \mathbf{f}(\mathbf{u}^{(i)}) \rangle_{\mathcal{K}}
    + (\gamma^n)^2 \Delta t^2  \sum_{i,j=1}^s b_i b_{j}\langle \mathbf{f}(\mathbf{u}^{(j)}), \mathbf{f}(\mathbf{u}^{(i)}) \rangle_{\mathcal{K}},
    \label{eq:rrk-root-expanded-norms}
\end{eqnarray}
where the inner-product in the $W^{k,2}_c$ is expanded as $\langle \mathbf{a} , \mathbf{b} \rangle_{W^{k,2}_c} =\langle \mathbf{a} , \mathbf{b} \rangle_{L^2}+\langle \mathbf{a} , \mathbf{b} \rangle_{\mathcal{K}}$. The four terms on the right-hand-side are of the order of the RK scheme and contribute towards the temporal entropy growth. Note that if the root-finding algorithm was solved with all inner-products evaluated in $L^2$, as in Eq.~(\ref{eq:root-implementation}), then a relaxation parameter $\gamma$ would be found only to eliminate the first two terms on the right-hand-side, leaving the final two terms to grow at the order of the temporal scheme.
\end{proof}

{\color{black}Lemma~\ref{lem:inner-prod-L2} establishes that the NSFR approach is not fully-discretely entropy conversing in the $W^{k,2}_c$ norm and sees temporal entropy growth at the order the RK schemes for terms computed in the seminorm, $\mathcal{K}$. In the following algorithm, we offer a solution by which the scheme is still fully-discrete in the temporal $L^2$ sense and we will further show in the results section where the scheme still benefits from robustness offered by fully-discrete entropy-conserving schemes and continues to provide stable solutions at higher CFL numbers compared to SD-NSFR. Since the scheme is semidiscretely entropy conserving in $W^{k,2}_c$ while fully-discrete in the temporal discretization in $L^2$, we report total entropy growth through the following definition.}

\begin{definition} \label{def:NSFR_num_entropy}
    For general convex functions that are not in an inner-product form, the ``NSFR numerical entropy" $\eta_c$ is defined as
    \begin{equation}
        \eta_c({\mathbf{u}}^{n+1}_\gamma) = \eta({\mathbf{u}}^{n+1}_\gamma) + \gamma^n \Delta t  \sum_{i=1}^s b_i \langle {\mathbf{v}}({\mathbf{u}}^{(i)}), \frac{\partial {\mathbf{u}}^{(i)}}{\partial t} \rangle_{\mathcal{K}},
    \end{equation}
    where the solution {\color{black} $\mathbf{u}$} is obtained from a semidiscretization which has a stability guarantee in the broken Sobolev $W_c^{k,2}$ norm, alike Equation~(\ref{eq:NSFR-vector}).
    Using $\eta_c$, we observe fully-discrete entropy conservation for the Burgers' and Euler equations in the sense that the spatial discretization is conserving in the broken Sobolev $W_c^{k,2}$ norm and temporal entropy change is prevented in $L^2$.
\end{definition}

\begin{algorithm}
    \caption{Relaxation RK Approach for NSFR with General Convex Entropy Functions}
    \label{alg:rrkForNavierStokes}
    \begin{algorithmic}
    \Function{RHS}{$\mathbf{u}$} 
        \State \Return result of Eq.~(\ref{eq:NSFR-vector}) \Comment{NSFR semidiscretization}
    \EndFunction
    \For{i in 1:s}
        \State $\mathbf{u}^{(i)} \gets \mathbf{u}^{n} + \Delta t^n \sum_{j=0}^s a_{ij} $ \Call{RHS}{$\mathbf{u}^{(i)}$}
    \EndFor

    \State $\mathbf{d} \gets \Delta t^n \sum_{i=0}^{s} b_i $\Call{RHS}{$\mathbf{u}^{(i)}$} \Comment{Search direction}
    \State $\mathbf{e}_{L^2} \gets \Delta t^n \sum_{i=1}^s b_i \langle \textbf{v}^{(i)}, $ \Call{RHS}{$\mathbf{u}^{(i)}$} $\rangle_{L^2} $ \Comment{$L^2$ entropy change estimate}
    \State Solve for $\gamma^n$ from Algorithm~\ref{alg:root}, using $\mathbf{e}_{L^2}$ and $\mathbf{d}$ to define \Call{root eq.}{$\gamma$} 
    \State $\mathbf{u}^{n+1}_\gamma \gets \mathbf{u}^n + \gamma^n \mathbf{b}$ 
    \State $t^{n+1}_\gamma \gets t^n + \gamma^n \Delta t$
    \State $\eta_c \gets \eta(\mathbf{u}^{n+1}_\gamma)+ \gamma^n \Delta t^n \sum_{i=1}^s b_i \langle \textbf{v}^{(i)}, $ \Call{RHS}{$\mathbf{u}^{(i)}$} $\rangle_{\mathcal{K}}$ \Comment{Per definition~\ref{def:NSFR_num_entropy}}
    \IIf{CFL adaptation is used} update $\Delta t^{n+1}$\EndIIf
\end{algorithmic}
\end{algorithm}

 Figure~\ref{fig:RRK-Burg-root-NRG-ev} confirms that if we employ Algorithm~\ref{alg:rrkForNavierStokes} then growth in energy evaluated based on $\eta_c$ is maintained at machine precision, while the impact of the $\mathcal{K}$ terms are evident when energy is evaluated in the $W^{k,2}_c$ norm.

\begin{figure}
    \centering
    \includegraphics[height=2.5 in]{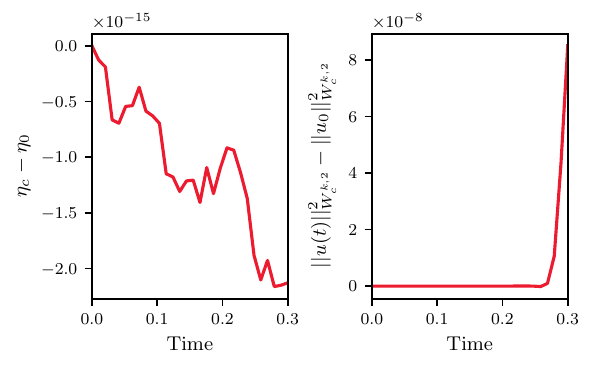}
    \caption{Energy evolution in the inviscid Burgers' case using the root equation Eq.~(\ref{eq:root-implementation}) and $c_+$, using $29$ time steps. The cumulative $L^2$ energy change is conserved (right), but broken Sobolev $W_c^{k,2}$ energy is not conserved (left).}
    \label{fig:RRK-Burg-root-NRG-ev}
\end{figure}

We demonstrate in Figure~\ref{fig:RRK-Burg-rootvsalg} that the energy generation in the $\mathcal{K}$  norm is {\color{black}smaller by approximately an order of magnitude} compared to that in the $W^{k,2}_c$ norm, if no correction had been made at all. 
We use the root-finding variation of RRK, implemented using the secant method solver defined by Algorithm~\ref{alg:root} and compare convergence using SSPRK3 in Figure \ref{fig:RRK-Burg-rootvsalg}. The problem definition is the same as in Section~\ref{sec:scalar}. We confirm that the error and relaxation parameter converge nearly identically between all variants. 
The root-finding approach for $c_{DG}$ shifts the energy generation up due to the imperfect convergence of the root solver. 
Such a result demonstrates that Eq. (\ref{eq:root-implementation}) indeed fully-discretely conserves numerical entropy to the level of the convergence tolerance only if the correction parameter is chosen to be $c_{DG}$.

\begin{figure}
    \centering
    \includegraphics[height=2.5 in]{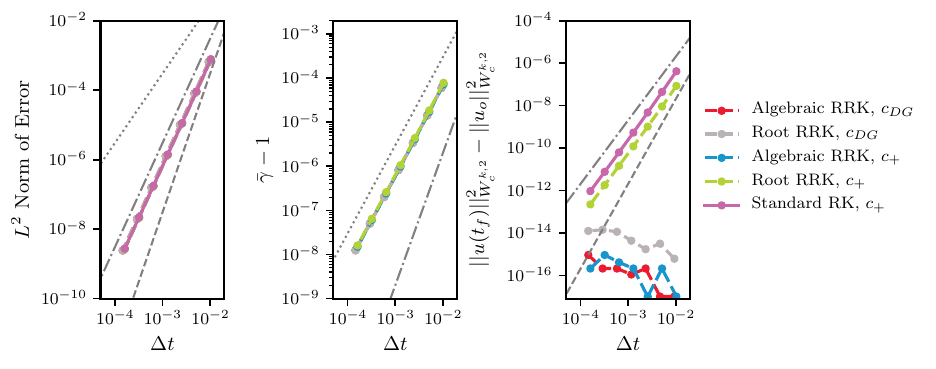}
    \caption{Convergence of the inviscid Burgers' test case using the root-finding RRK method and the algebraic RRK method. All results use SSPRK3. ``Algebraic RRK" indicates that the method introduced in Section~\ref{sec:scalar} is used to find $\gamma^n$, ``Root RRK" indicates that we solve using Eq.~(\ref{eq:root-implementation}), and ``Standard RK" indicates that no relaxation approach is used.}
    \label{fig:RRK-Burg-rootvsalg}
\end{figure}


We now return to the case where the numerical entropy function is a general convex function, such as that for the Euler and Navier-Stokes equations, where $S(\mathbf{u}) = -\rho s$.
\color{black}
{ For cases where the spatial entropy guarantee is in $L^2$, Ranocha \textit{et al.}~\cite{ranocha2020relaxation} established the existence and accuracy of a relaxation parameter $\gamma^n$ by showing that the root function is convex with a root at $0$ and another close to $1$. The analysis of~\cite{ranocha2020relaxation} applies directly if the $c_{DG}$ correction parameter is used. We discretely demonstrate that the root function has similar properties regardless of the choice of correction parameter} for the inviscid Taylor-Green vortex test case, which will be introduced in section \ref{sec:inv-tgv}, as an entropy-conserving Euler test case.
We demonstrate discretely that the root function is convex with a root close to $1$ in Figure~\ref{fig:RRK-root-Euler} (a). 
This is true for both $c_{DG}$ and $c_+$ using Algorithm~\ref{alg:rrkForNavierStokes}. Moreover, the choice of $c$ does not change the form of the root equation, reflected in the comparable shapes of the root equation for both correction parameters.
To confirm the convexity of the root function, we calculate the slope of the root equation at every time step using finite difference. 
The slope is strictly negative at $\gamma=0$ and strictly positive at $\gamma=1$ for all time using either correction parameter as observed in Fig.~\ref{fig:RRK-root-Euler} (b).
This is true regardless of the sign and magnitude of the $L^2$-norm entropy change estimate, seen in the bottom plot of Figure \ref{fig:RRK-root-Euler} (b). 
Furthermore, we will provide confirmation of the convergence orders in subsequent sections for both $c$ parameters in a variety of test cases.

\begin{figure}[h!]
\centering
\begin{subfigure}[t]{0.48\textwidth}
    \centering
        \includegraphics[height = 5 in]{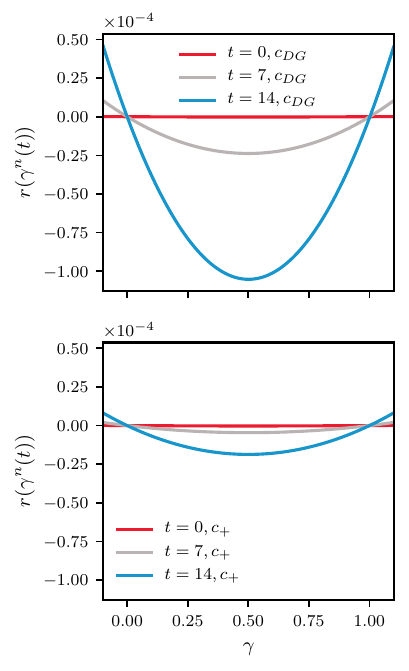}
    \caption{The root function plotted as a function of $\gamma$ for the inviscid TGV test case for $c_{DG}$ (top) and $c_+$ (bottom). }
\end{subfigure}
\hfill
\begin{subfigure}[t]{0.48\textwidth}
    \centering
    \includegraphics[height = 5 in]{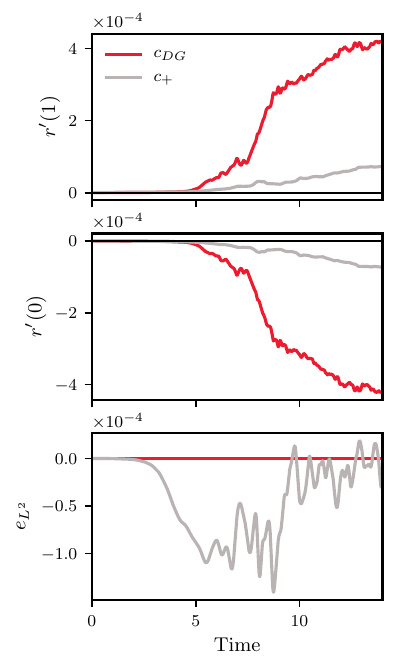}
    \caption{The slope of the root function  at $\gamma=0$ (top), at $\gamma=1$ (middle), and the entropy change estimate $e_{L^2}\coloneqq \Delta t \sum_{i=1}^s b_i \langle \textbf{v}^{(i)}, \frac{d\textbf{u}^{(i)}}{dt} \rangle_{L^2}$(bottom), plotted over the simulation time. Even in the low-$t$ region, $r^\prime(1)>0$ and $r^\prime(0)<0$.}
\end{subfigure}
    \caption{The root function for $c_{DG}$ and for $c_+$, plotted per the implementation in Eq.~(\ref{eq:root-implementation}). The test case is as described in Section \ref{sec:inv-tgv}, using the largest stable CFL, i.e. $CFL=0.48$ for the $c_{DG}$ version, and $CFL = 0.54$ for the $c_+$ version.}
    \label{fig:RRK-root-Euler}
\end{figure}

\subsection{Results using the isentropic vortex advection test case}

The following set of numerical experiments will use the compressible Euler equations as introduced in Section \ref{sec:NS-euler-governing-eqns}. For all Euler and Navier-Stokes test cases, we choose the entropy-conserving flux of Chandrashekar~\cite{chandrashekar2013kinetic}, with the modification of Ranocha~\cite{ranocha2021preventing} to ensure pressure equilibrium. 
In entropy-stable cases, we add local Lax-Friedrichs dissipation as introduced in Section~\ref{sec:num-flux-choice} to the entropy-conserving flux, where the maximum eigenvalue at each surface is calculated as
\begin{equation}
    \lambda_{\text{max}}= c + |{q}_i \hat{n}_i|,
\end{equation}
\noindent where $c=\sqrt{p \gamma_{\text{gas}} / \rho}$ is the speed of sound and $\hat{n}_i$ are the components of the unit normal vector.

The two-dimensional isentropic vortex test case is used to ensure that the scheme achieves an acceptable order of convergence when solving the Euler equations. 
We use the Shu variant of the isentropic vortex advection case~\cite{shu1998essentially}, implemented nondimensionally according to Table 1 in~\cite{spiegel2015survey}. The domain size is increased from $L=5$ to $L=10$ according to the recommendations of Spiegel \textit{et al.}~\cite{spiegel2015survey}. 
The exact solution for $[x,y] \in [-L,L], t  \in [0,t_f]$ in primitive variables is 
\begin{equation}
    \begin{aligned}
        \bar{x}(x, t) &= \mod \left(x - M_\infty \cos(\alpha) t + L, 2L\right) - L \\
        \bar{y}(y, t) &= \mod \left(y - M_\infty \sin(\alpha) t + L, 2L\right) - L \\
        \varphi(x, y, t) &= M_\infty \frac{5 \sqrt{2}}{4 \pi} 
                     \exp{ \left( \frac{-1}{4} 
                     \left( \bar{x}(x, t)^2 + \bar{y}(y, t)^2
                     \right)
                     \right)
                     }\\
        \rho(x,y,t) &= \left( 1 - \frac{\gamma_{\text{gas}}-1}{2} \varphi(x,y,t)^2
                     \right)
                     ^{1/(\gamma_{\text{gas}}-1)}\\
        q_1(x,y,t)  &= M_\infty \cos(\alpha) - \bar{y}(y, t) \varphi(x,y,t) \\
        q_2(x,y,t)  &= M_\infty \sin(\alpha) + \bar{x}(x, t) \varphi(x,y,t) \\
        p(x,y,t) &= \frac{1}{\gamma_{\text{gas}}} 
                     \left( 1 - \frac{\gamma_{\text{gas}}-1}{2} \varphi(x,y,t)^2
                     \right)
                     ^{\gamma_{\text{gas}}/(\gamma_{\text{gas}}-1)},
    \end{aligned}
\end{equation}
\noindent where $\bar{x}, \bar{y}$ translate the vortex in time and take into account the periodic boundaries, and $\varphi$ is the Gaussian distribution of the isentropic vortex.
The Mach number is $M_\infty = \sqrt{2/\gamma_{\text{gas}}}$, angle of attack is $\alpha = \pi/4$, ratio of specific heats is $\gamma_{\text{gas}} = 1.4$, and half-length of the domain is $L = 10$. 
We calculate for a single cycle through the domain until the vortex returns to its initial location. 
The exact solution is always calculated as a function of the exact end time, which is necessary when using FD-NSFR due to the adjustment of the time step size.
We use Cartesian grids with equally-sized elements. The solution and flux nodes are collocated GLL.
The constant time step size is set as $\Delta t = \frac{\Delta x}{10 M_\infty (p+1)}$, where $\Delta x$ is the grid size. 

We choose to evaluate pressure convergence as it accounts for both velocity and density and, therefore may be a better indicator of overall performance than density error. 
The error is calculated by overintegrating by 10 over the domain. 
We provide the convergence behaviour for combinations of spatial orders $p=2$ or $p=3$ and SSPRK3 and RK4 respectively, correction parameters $c_{DG}$, $c_{Hu}$ or $c_+$, and both entropy-conserving and entropy-stable fluxes.
The convergence behaviour is shown in Figure \ref{fig:isentropic-convergence}, and tabulated in Table \ref{tab:isentropic_conv_ec} for entropy-conserving cases and Table \ref{tab:isentropic_conv_es} for entropy-stable cases.

The entropy-conserving schemes converge at about $p$ for all FR schemes.
When Lax-Friedrichs dissipation is added to form an entropy-stable FD-NSFR scheme in Table \ref{tab:isentropic_conv_es}, the convergence order is closer to optimal $p+1$ convergence for the $p=3$ and RK4 case, however the $L^\infty$ error remains at about $p + 1/2$.
As expected, a slightly upward shift is observed in the error levels when the FR correction parameter $c$ is increased; however, we observe the same convergence order.
Sub-optimal convergence in entropy-conserving schemes is consistent with current literature: Crean \textit{et al.}~\cite{crean2018entropy} show convergence lower than $p+1$ for the isentropic vortex case using both entropy-conserving and entropy-stable fluxes, while Chan~\cite{chan2018discretely} shows $p+1$ convergence for small $p$ and $p+1/2$ convergence for a higher $p$ while using an entropy-stable scheme for the isentropic vortex case.
As the convergence results presented herein are consistent with established literature, we deem the accuracy of the FD-NSFR scheme to be satisfactory.

\begin{figure}
\begin{minipage}{0.4 \textwidth}
    \centering
    \includegraphics[height = 2.5 in]{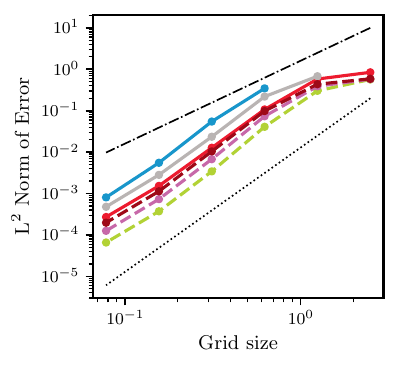}
\end{minipage}
\begin{minipage}{0.6 \textwidth}
    \centering
    \includegraphics[height = 2.5 in]{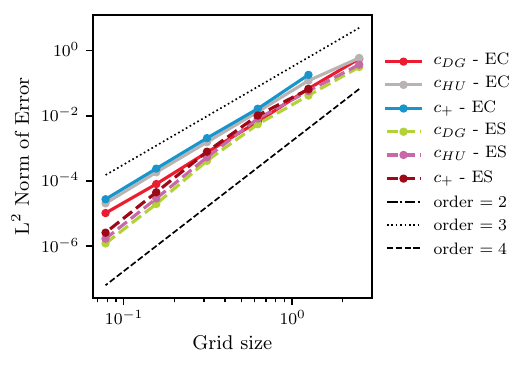}
\end{minipage}
    \caption{$L^2$ error evaluated using pressure for the isentropic vortex test case for the FD-NSFR scheme. Left: convergence when $p=2$ polynomial basis functions are used with SSPRK3 temporal integration. Right: convergence with $p=3$ and RK4. Entropy-conserving fluxes are denoted ``EC", while entropy-stable fluxes are denoted ``ES".}
    \label{fig:isentropic-convergence}
\end{figure}

\begin{table}[hbt!]
\centering
\caption{Convergence of pressure to the exact solution after one cycle of the isentropic vortex advection case using the fully-discrete method with the entropy-conserving Ranocha flux.}
\label{tab:isentropic_conv_ec}
\begin{tabular}{|p{0.75 in}|l|ll|ll|ll|}
\hline
Scheme &  $n$   & $L^1$ error & $L^1$ rate & $L^2$ error & $L^2$ rate & $L^\infty $ error & $L^\infty$ rate \\
\hline
     $c_{DG}$,&   8 &           1.06E+01 &                      - &           8.44E-01 &                      - &               5.33E-01 &                          - \\
SSPRK3, &  16 &           5.61E+00 &                   0.92 &           5.82E-01 &                   0.54 &               3.99E-01 &                       0.42 \\
  $p=2$ &  32 &           1.18E+00 &                   2.25 &           1.07E-01 &                   2.44 &               7.98E-02 &                       2.32 \\
        &  64 &           1.80E-01 &                   2.71 &           1.27E-02 &                   3.07 &               1.08E-02 &                       2.89 \\
        & 128 &           2.18E-02 &                   3.04 &           1.53E-03 &                   3.06 &               1.36E-03 &                       2.98 \\
        & 256 &           3.80E-03 &                   2.52 &           2.71E-04 &                   2.50 &               3.28E-04 &                       2.05 \\
\hline
     $c_{Hu}$,&  16 &           7.64E+00 &                      - &           6.79E-01 &                      - &               4.56E-01 &                          - \\
SSPRK3, &  32 &           2.23E+00 &                   1.78 &           2.19E-01 &                   1.63 &               1.47E-01 &                       1.64 \\
  $p=2$ &  64 &           3.19E-01 &                   2.80 &           2.35E-02 &                   3.22 &               1.47E-02 &                       3.32 \\
        & 128 &           4.01E-02 &                   2.99 &           2.82E-03 &                   3.06 &               2.28E-03 &                       2.69 \\
        & 256 &           6.41E-03 &                   2.65 &           4.78E-04 &                   2.56 &               4.88E-04 &                       2.23 \\
\hline
     $c_{+}$,&  32 &           4.07E+00 &                      - &           3.46E-01 &                      - &               2.19E-01 &                          - \\
SSPRK3, &  64 &           6.32E-01 &                   2.69 &           5.47E-02 &                   2.66 &               3.75E-02 &                       2.54 \\
  $p=2$ & 128 &           7.09E-02 &                   3.16 &           5.53E-03 &                   3.31 &               5.35E-03 &                       2.81 \\
        & 256 &           9.87E-03 &                   2.84 &           8.07E-04 &                   2.78 &               9.33E-04 &                       2.52 \\
\hhline{|=|=|==|==|==|}
    $c_{DG}$,&   8 &           6.81E+00 &                      - &           5.66E-01 &                      - &               3.74E-01 &                          - \\
  RK4, &  16 &           9.31E-01 &                   2.87 &           6.80E-02 &                   3.06 &               3.71E-02 &                       3.33 \\
 $p=3$ &  32 &           9.64E-02 &                   3.27 &           6.61E-03 &                   3.36 &               3.88E-03 &                       3.26 \\
       &  64 &           1.06E-02 &                   3.19 &           7.62E-04 &                   3.12 &               5.11E-04 &                       2.93 \\
       & 128 &           1.07E-03 &                   3.31 &           8.10E-05 &                   3.23 &               5.79E-05 &                       3.14 \\
       & 256 &           1.20E-04 &                   3.15 &           1.04E-05 &                   2.96 &               1.16E-05 &                       2.32 \\
 \hline
    $c_{Hu}$,&   8 &           6.28E+00 &                      - &           5.96E-01 &                      - &               3.94E-01 &                          - \\
  RK4, &  16 &           1.67E+00 &                   1.91 &           1.20E-01 &                   2.31 &               7.21E-02 &                       2.45 \\
 $p=3$ &  32 &           1.93E-01 &                   3.11 &           1.33E-02 &                   3.17 &               8.77E-03 &                       3.04 \\
       &  64 &           2.21E-02 &                   3.12 &           1.56E-03 &                   3.09 &               1.12E-03 &                       2.97 \\
       & 128 &           2.46E-03 &                   3.17 &           1.86E-04 &                   3.07 &               1.56E-04 &                       2.85 \\
       & 256 &           2.61E-04 &                   3.24 &           2.07E-05 &                   3.16 &               2.15E-05 &                       2.85 \\
\hline
    $c_{+}$,&  16 &           2.57E+00 &                      - &           1.81E-01 &                      - &               1.01E-01 &                          - \\
  RK4, &  32 &           2.40E-01 &                   3.42 &           1.65E-02 &                   3.45 &               9.49E-03 &                       3.41 \\
 $p=3$ &  64 &           2.91E-02 &                   3.04 &           2.07E-03 &                   2.99 &               1.87E-03 &                       2.35 \\
       & 128 &           3.09E-03 &                   3.24 &           2.40E-04 &                   3.11 &               2.59E-04 &                       2.85 \\
       & 256 &           3.24E-04 &                   3.26 &           2.76E-05 &                   3.12 &               4.44E-05 &                       2.55 \\
\hline
\end{tabular}
\end{table}

\begin{table}[hbt!]
\centering
\caption{Convergence of pressure to the exact solution fter one cycle of the isentropic vortex advection case using the fully-discrete method with the Ranocha flux with added Lax-Friedrichs dissipation.}
\label{tab:isentropic_conv_es}
\begin{tabular}{|p{0.75 in}|l|ll|ll|ll|}
\hline
Scheme &  $n$   & $L^1$ error & $L^1$ rate & $L^2$ error & $L^2$ rate & $L^\infty $ error & $L^\infty$ rate \\
     $c_{DG}$,&   8 &           3.27E+00 &                      - &           5.67E-01 &                      - &               4.09E-01 &                          - \\
SSPRK3, &  16 &           1.36E+00 &                   1.27 &           3.02E-01 &                   0.91 &               2.40E-01 &                       0.77 \\
  $p=2$ &  32 &           1.80E-01 &                   2.92 &           4.09E-02 &                   2.88 &               3.69E-02 &                       2.70 \\
        &  64 &           1.57E-02 &                   3.52 &           3.43E-03 &                   3.58 &               3.34E-03 &                       3.47 \\
        & 128 &           1.85E-03 &                   3.08 &           3.72E-04 &                   3.20 &               5.94E-04 &                       2.49 \\
        & 256 &           2.91E-04 &                   2.67 &           6.57E-05 &                   2.50 &               9.78E-05 &                       2.60 \\
\hline
     $c_{Hu}$,&   8 &           3.83E+00 &                      - &           5.78E-01 &                      - &               4.06E-01 &                          - \\
SSPRK3, &  16 &           1.69E+00 &                   1.17 &           3.89E-01 &                   0.57 &               3.01E-01 &                       0.43 \\
  $p=2$ &  32 &           3.22E-01 &                   2.40 &           7.35E-02 &                   2.41 &               7.12E-02 &                       2.08 \\
        &  64 &           2.91E-02 &                   3.47 &           6.75E-03 &                   3.44 &               7.65E-03 &                       3.22 \\
        & 128 &           3.57E-03 &                   3.03 &           7.32E-04 &                   3.21 &               1.04E-03 &                       2.88 \\
        & 256 &           5.69E-04 &                   2.65 &           1.26E-04 &                   2.54 &               1.87E-04 &                       2.48 \\
\hline
     $c_{+}$,&   8 &           4.50E+00 &                      - &           5.89E-01 &                      - &               4.00E-01 &                          - \\
SSPRK3, &  16 &           1.96E+00 &                   1.20 &           4.36E-01 &                   0.43 &               3.35E-01 &                       0.26 \\
  $p=2$ &  32 &           4.73E-01 &                   2.05 &           9.80E-02 &                   2.15 &               9.19E-02 &                       1.87 \\
        &  64 &           4.79E-02 &                   3.30 &           1.04E-02 &                   3.24 &               1.16E-02 &                       2.99 \\
        & 128 &           6.09E-03 &                   2.97 &           1.13E-03 &                   3.20 &               1.76E-03 &                       2.72 \\
        & 256 &           1.05E-03 &                   2.53 &           1.98E-04 &                   2.52 &               3.13E-04 &                       2.49 \\
\hhline{|=|=|==|==|==|}
     $c_{DG}$, &   8 &           1.87E+00 &                      - &           3.16E-01 &                      - &               2.38E-01 &                          - \\
  RK4, &  16 &           4.15E-01 &                   2.17 &           4.22E-02 &                   2.90 &               2.96E-02 &                       3.01 \\
 $p=3$ &  32 &           7.76E-02 &                   2.42 &           5.52E-03 &                   2.93 &               3.39E-03 &                       3.12 \\
       &  64 &           5.95E-03 &                   3.70 &           4.19E-04 &                   3.72 &               3.32E-04 &                       3.35 \\
       & 128 &           2.52E-04 &                   4.56 &           1.98E-05 &                   4.40 &               2.15E-05 &                       3.95 \\
       & 256 &           1.07E-05 &                   4.56 &           1.22E-06 &                   4.02 &               2.14E-06 &                       3.33 \\
 \hline
     $c_{Hu}$, &   8 &           2.17E+00 &                      - &           3.72E-01 &                      - &               2.72E-01 &                          - \\
  RK4, &  16 &           6.14E-01 &                   1.82 &           5.80E-02 &                   2.68 &               4.07E-02 &                       2.74 \\
 $p=3$ &  32 &           1.10E-01 &                   2.49 &           8.22E-03 &                   2.82 &               5.81E-03 &                       2.81 \\
       &  64 &           6.86E-03 &                   4.00 &           5.39E-04 &                   3.93 &               3.70E-04 &                       3.97 \\
       & 128 &           3.14E-04 &                   4.45 &           2.91E-05 &                   4.21 &               3.16E-05 &                       3.55 \\
       & 256 &           1.32E-05 &                   4.57 &           1.70E-06 &                   4.10 &               2.80E-06 &                       3.50 \\
\hline
      $c_{+}$ &  16 &           6.23E-01 &                      - &           6.57E-02 &                      - &               4.70E-02 &                          - \\
  RK4, &  32 &           1.33E-01 &                   2.23 &           1.02E-02 &                   2.68 &               7.79E-03 &                       2.59 \\
 $p=3$ &  64 &           9.19E-03 &                   3.85 &           7.95E-04 &                   3.69 &               5.96E-04 &                       3.71 \\
       & 128 &           4.15E-04 &                   4.47 &           4.53E-05 &                   4.13 &               5.46E-05 &                       3.45 \\
       & 256 &           1.67E-05 &                   4.63 &           2.61E-06 &                   4.12 &               4.64E-06 &                       3.56 \\
\hline
\end{tabular}
\end{table}

\subsection{Results using the inviscid Taylor-Green vortex test case}
The inviscid Taylor-Green vortex test case is a challenging test case for standard DG methods, especially when coarse grids are used.
We use the Euler equations as defined in Section \ref{sec:NS-euler-governing-eqns} and initialize per~\cite{wang2013high}, the initial condition used in the 1st International Workshop on High-Order CFD Methods:
\begin{equation}
    \begin{aligned}
        q_1 &= \sin\left(x\right) \cos\left(y \right)\cos\left(z\right) \\ 
        q_2 &= - \cos\left(x\right) \sin\left(y\right) \cos\left(z\right) \\
        q_3 &= 0 \\
        p &=   \frac{1}{\gamma_{\text{gas}} M_o^2} + \frac{1}{16} \left(\cos\left(2x \right) + \cos\left(2y \right)\right) \left(\cos\left(2z \right) +2\right),
    \end{aligned}
    \label{eq:tgv-initial-cond}
\end{equation}
where $q$ are velocities in each coordinate direction. 
We use an isothermal density initialization, setting $\rho = p \gamma_{\text{gas}} M_o^2$ with $M_o = 0.1$.
We solve in the triply-periodic domain $\mathbf{x} \in [0, 2\pi]^3, \ t \in [0,14]$. 
The time step is chosen adaptively according to the current solution state as
\begin{equation}
    \Delta t^n = \frac{CFL \ \Delta x }{\lambda_{\text{max}}^n (p+1)},
    \label{eq:CFL-adaptation}
\end{equation}
where $CFL$ is the Courant-Friedrichs-Lewy number, $\Delta x$ is the element length, and $\lambda_{\text{max}}^n=\sqrt{q_1^2 + q_2^2 + q_3^2} + \sqrt{p \gamma_{\text{gas}}/\rho}$ is the maximum wave speed at each step.
We use SSPRK3 for the RK method, and the root-finding version of RRK as described in Section~\ref{sec:RRK-root} for the FD-NSFR formulation.
When the FD-NSFR scheme is used, the adapted time step size per Eq.~(\ref{eq:CFL-adaptation}) is subsequently modified by the relaxation parameter.
We use a coarse grid with 8 elements per direction with $p=3$.
As with the isentropic vortex test case, we use collocated GLL solution and flux nodes and straight, evenly-spaced elements, and employ Ranocha's modification to the Chandrashekar flux~\cite{ranocha2021preventing, chandrashekar2013kinetic}.

Figure~\ref{fig:TGV-entropy-ev} compares FD-NSFR and SD-NSFR schemes at large time step sizes. 
{\color{black}The largest time step is determined by finding the maximum time step size that does not cause an abort due to negative density. This is achieved through trial and error by incrementally increasing the CFL number by $0.01$ until the simulation fails.}
Two refinements by a factor of two are shown for both $c_{DG}$ and $c_+$.
{\color{black} When the FD-NSFR scheme is used, 
NSFR numerical entropy is near machine zero for any time step size for both $c_+$ and $c_{DG}$.
This verifies the entropy-conserving property for $c_{DG}$ in the $L^2$ norm. For the non-zero $c_+$, we confirm that there is no temporal entropy change in the $L^2$ norm. 
On the other hand, SD-NSFR results in an entropy change on the order of $10^{-7}$ for $c_{DG}$ or $10^{-8}$ for $c_+$.
}
While $c_+$ SD-NSFR has a smaller magnitude of entropy change than $c_{DG}$ SD-NSFR, the change remains orders of magnitude higher than the FD-NSFR version using the same correction parameter.


\begin{figure}[h]
    \centering
    \includegraphics[height=2.5in]{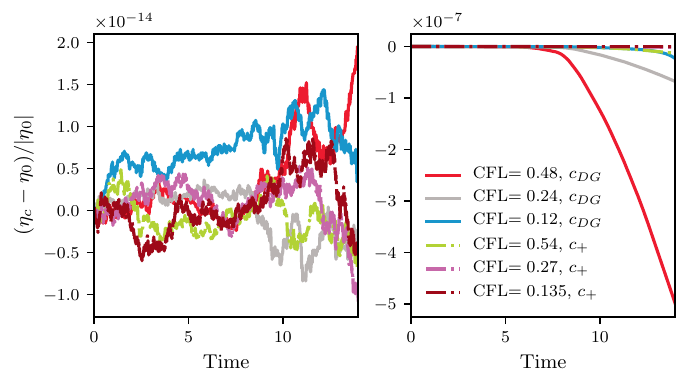}
    \caption{Evolution of cumulative $\eta_c$ for the inviscid TGV test case using FD-NSFR (left) and SD-NSFR (right). Three refinements by a factor of two are shown from the largest stable time step. Entropy change is normalized by the value at $t=0$.}
    \label{fig:TGV-entropy-ev}
\end{figure}

We also plot the relaxation parameter evolution during the solution for the FD-NSFR schemes in Figure~\ref{fig:TGV-relaxation-param}. 
Until $t=4.5$, the solution remains smooth and uniform, attributed to the sinusoidal nature of the initial condition.
As time advances, the flow gradually loses its structured pattern, yielding to the presence of high-frequency modes.  
This is mirrored by the relaxation parameter: before $t=4.5$, the relaxation parameter oscillates close to $1$, and departs after $t=4.5$; however it remains within ${\cal O}(\Delta t^{p-1})$.
We also note that the $c_+$ calculations have a smaller magnitude of relaxation parameter, especially in the second regime. 
This observation can be associated with the lesser extent of entropy change evident in Figure~\ref{fig:TGV-entropy-ev}. 
Since FR correction parameters larger than $c_{DG}$ damp the modal coefficients associated with the highest order of the solution polynomial, we conjecture that the scheme subsequently suppresses entropy growth associated with these modes. As a result, correction parameters larger than $c_{DG}$ result in a smaller correction required to prevent the growth of entropy from the temporal discretization.

\begin{figure}[h]
    \centering
    \includegraphics[height=2.5in]{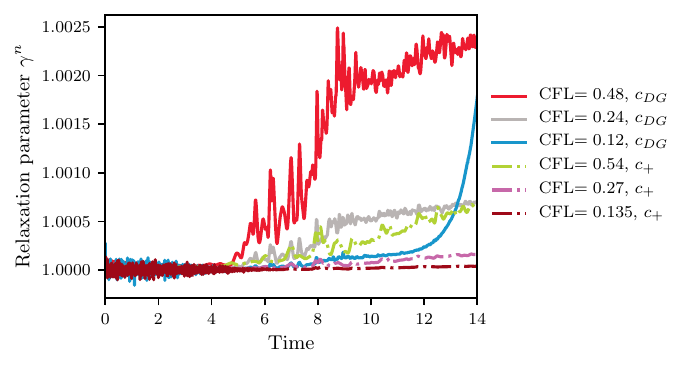}
    \caption{Evolution of the relaxation parameter in inviscid TGV test case when FD-NSFR is used.}
    \label{fig:TGV-relaxation-param}
\end{figure}

We expect the relaxation parameter to scale at $1+\mathcal{O}(\Delta t^{p-1})$. 
Once the flow loses its structured nature, it is visually apparent in Fig.~\ref{fig:TGV-relaxation-param} that the relaxation parameter follows a consistent trajectory across different refinements. Therefore, we evaluate convergence of the relaxation parameter at a specified time. Table \ref{tab:relaxation_convergence_TGV} demonstrates that convergence orders are close to $2$ at the point $t=7.0$ for both $c_{DG}$ and $c_+$. Convergence was close to $2$ for most times in the vicinity of $t=5$ to $t=8$, and deviations from $2$ are expected with the amount of noise seen in Fig.\ref{fig:TGV-relaxation-param}.

\begin{table}[h]
\centering
\caption{Convergence of the relaxation parameter towards $1$ at $=7.0$ for the TGV test case.}
\label{tab:relaxation_convergence_TGV}
\begin{tabular}{|p{0.75 in}|r|r|r|}
\hline
FR Scheme & CFL &     $\gamma(t=7.0)$ &  $\gamma(t=7.0)$ rate \\
\hline
        & 0.48 & 1.0011446 &         - \\
$c_{DG}$& 0.24 & 1.0002907 &   1.98 \\
        & 0.12 & 1.0000729 &   2.00 \\
        & 0.06 & 1.0000181 &   2.01 \\
\hhline{|=|=|=|=|}
        & 0.54   & 1.0002573 &         - \\
$c_{+}$ & 0.27   & 1.0000650 &   1.99 \\
        & 0.135  & 1.0000173 &   1.91 \\
        & 0.0675 & 1.0000036 &   2.28 \\
\hline
\end{tabular}
\end{table}

We can see a clear benefit of FD-NSFR over SD-NSFR for inviscid cases by comparing the two schemes in an entropy-conserving manner. In such a comparison, we decrease the time step size of the SD-NSFR calculation until numerical entropy is conserved on the order of $10^{-11}$. Table \ref{tab:n-tsteps-entropy-conserving} demonstrates that when we compare on an entropy-conserving basis, FD-NSFR requires far fewer time steps. 
Here, we define entropy conservation based on the change in $L^2$ numerical entropy, using the indicator developed in section~\ref{sec:RRK-root}.
Other authors have reported an increased cost of about $1.5\times$ when comparing semi-discrete entropy-stable DGSEM and fully-discrete entropy-stable \cite{rogowski2022performance}.
We see a roughly similar cost increase when comparing at the same time step size.
{\color{black}We argue that a comparison at the same large time step size does not account for the stability benefits offered by fully-discrete entropy-stable methods. Hence, we present Table~\ref{tab:n-tsteps-entropy-conserving} as a supplemental comparison of simulations having equivalent stability properties.}

\begin{table}[hbt!]
\caption{Number of time steps for $c_{+}$ and $c_{DG}$ in the inviscid TGV test case with FD-NSFR and SD-NSFR. For SD-NSFR cases, the time step has been chosen such that numerical entropy changes on the order of $10^{-11}$.}
\label{tab:n-tsteps-entropy-conserving}
\centering
\begin{tabular}{|c|c|c||c|c|}
\hline
Scheme & RK Type & FR Type & CFL & Number of time steps \\
\hline
FD-NSFR & SSPRK3 &$c_{+}$ & 0.54 &  1454 \\
FD-NSFR & SSPRK3 & $c_{DG}$ & 0.48 & 1634 \\
SD-NSFR & SSPRK3 & $c_{+}$ & 0.0675 & 11622 \\
SD-NSFR & SSPRK3 & $c_{DG}$ & 0.005 & 192043 \\
\hline
\end{tabular}
\end{table}

\subsubsection{Application of RRK to a semidiscretization without an entropy stability guarantee does not result in an entropy-stable scheme.}
{\color{black} Next, the use of RRK for a non-entropy conserving spatial discretization is investigated, revealing that this application does not result in a fully discrete entropy-stable scheme.}
Figure \ref{fig:TGV-strong} presents the inviscid TGV test case similar to the preceding results, but using a non-split strong ESFR scheme using $c_{DG}$ and the Roe flux \cite{roe1981approximate} with Harten's entropy fix~\cite{harten1983self}. We observe that, in the absence of nonlinear stability or other stabilization techniques, the non-split scheme is unable to reach the end time at $t=14$. There is a non-physical increase in entropy, growing until the calculation crashes. Adding RRK to the scheme does not prolong the simulation's end time. In this context, RRK only addresses entropy change due to the temporal discretization. While the relaxation parameter behaves similarly to the FD-NSFR case for most of the run time, it does nothing to address the entropy violation of the spatial discretization. 
It may be tempting to force entropy conservation by setting the entropy change estimate term in Eq. \ref{eq:rrk-root} to be zero {\color{black} (even if this is clearly non-zero for a non-entropy conserving scheme)} therefore ensuring $\eta -\eta_0 = 0$ such that there is no entropy generation over a time step from either the spatial or temporal discretizations. While such a modification does result in entropy conservation, Figure~\ref{fig:TGV-strong} demonstrates that the relaxation parameter quickly departs from $1$ and will decrease until time no longer advances.

\begin{figure}[h!]
    \centering    
    \includegraphics[height = 2.5 in]{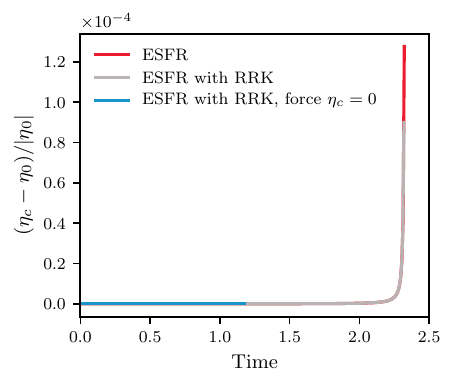}
    \includegraphics[height = 2.5 in]{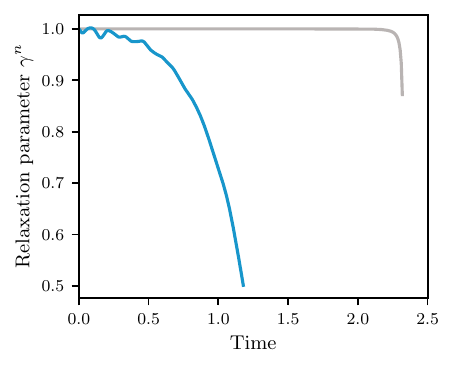}
    \caption{Entropy evolution (left) and relaxation parameter evolution (right) when applying RRK to ESFR. Entropy change is plotted relative to $t=0$.}
    \label{fig:TGV-strong}
\end{figure}

\subsection{Results using the inviscid Taylor-Green vortex test case} \label{sec:visc-tgv-results}
We will next move to the viscous TGV test case, using the Navier-Stokes equations which were defined in Section \ref{sec:NS-euler-governing-eqns}. We use the same TGV initial condition as in Eq.~(\ref{eq:tgv-initial-cond}), with the standard values $Re=1600$ and $Pr=0.71$ for air. The resulting flow begins as a laminar, structured flow, then as the vortices interact,  turbulent structures form and the flow eventually decays back to rest. We solve until $t_f=10$, which captures peak dissipation.


For the present results, we use $p=5$ and $16$ evenly-spaced elements per direction, corresponding to $96^3$ DOF per dimension. The discretization uses GLL quadrature points for the solution and GL quadrature points for the flux without overintegration.
The grid is straight-sided and is periodic in all coordinate directions.
We use Ranocha's modification to the Chandrashekar flux for the two-point convective flux~\cite{ranocha2021preventing, chandrashekar2013kinetic}, with the viscous and solution numerical fluxes defined using the symmetric interior penalty method~\cite{arnold1982interior} stated in Eq. (\ref{eq:SIP}).
We employ no turbulence model. 
Rather, we use the implicit LES (iLES) approach, where the truncation error associated with the grid is assumed to account for subgrid-scale dissipation~\cite{boris1992new,margolin2006modeling}. 
We use SSPRK3 for the RK method, and CFL-adaptation as described in the inviscid TGV section. Unless otherwise specified, we set the CFL numbers as a large, but stable, size for each FR correction parameter.

Figure \ref{fig:VTGV-SD-FD} demonstrates that FD-NSFR and SD-NSFR yield visually indistinguishable dissipation rates. We report kinetic energy-based dissipation, calculated using finite difference in time, 
\begin{equation}
    \epsilon_{K\hspace{-2pt}E} = -\frac{\text{d}\ K\hspace{-4pt}E}{\text{d}t},
\end{equation}
\noindent where kinetic energy per unit volume is integrated across the domain at $p+10$:
\begin{equation}
    K\hspace{-4pt}E = \frac{1}{\Omega}\int_\Omega \rho \frac{||\mathbf{v}||^2}{2} d \Omega.
\end{equation}
As the TGV test case is nearly incompressible at $M=0.1$, we can approximate the dissipation rate as \cite{wang2013high}
\begin{equation}
    \epsilon_{\omega} = \frac{2 \varepsilon}{Re},
\end{equation}
where enstrophy per unit volume $\varepsilon$ is found by integrating across the domain at $p+10$,
\begin{equation}
    \varepsilon = \frac{1}{\Omega}\int_\Omega  \rho \frac{||\bm{\omega}||^2}{2 } d \Omega,
\end{equation}
\noindent with vorticity calculated as $\bm{\omega}=\nabla \times \mathbf{v}$. We approximate the dissipation by the grid  as the difference between KE-based and vorticity-based dissipation, 
\begin{equation}
    \epsilon_{num} = \epsilon_{total}-\epsilon_{\omega}.
\end{equation}
Using the $96^3$-DOF viscous TGV test case with the $c_+$ correction parameter, we demonstrate that the SD-NSFR and FD-NSFR schemes both produce high-quality results when the time step is chosen to be relatively large. In fact, the two schemes yield dissipation evolutions which appear superimposed, which indicates that FD-NSFR does not impact the solution quality for this low-Mach turbulent flow.

\begin{figure}[h!]
    \centering
    \includegraphics[height = 2.5 in]{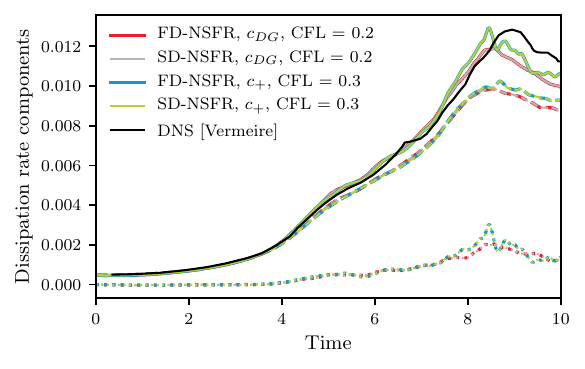}
    \caption{Dissipation rate components in the viscous TGV test case. The solid line is kinetic energy-based dissipation rate, $\epsilon_{K\hspace{-2pt}E}$, the dashed line is vorticity-based dissipation rate, $\epsilon_{\omega}$, and the dotted line is approximate numerical dissipation $\epsilon_{num}$. The DNS data are from Vermeire~\cite{vermeire2014adaptive}. The FD-NSFR and SD-NSFR lines nearly coincide.}
    \label{fig:VTGV-SD-FD}
\end{figure}

The NSFR entropy evolution $\eta_c$ is strictly decreasing for any $c$ value tested, demonstrated in Fig. \ref{fig:VTGV-num-entropy-Cparam} for $c_{DG}$, $c_{Hu}$ and $c_{+}$ using the FD-NSFR scheme. 
The $L^2$ numerical entropy evolution for each $c$ parameter is slightly different.
However, even for the extreme case of $c_+$, the NSFR numerical entropy change remains dissipative. 

The SD-NSFR and FD-NSFR schemes have very similar FR-corrected entropy change evolutions when comparing at the same $c$ value. 
{\color{black}Fig. \ref{fig:VTGV-num-entropy-Cparam} indicates that NSFR entropy change is on the order of $10^{-5}$ for any value of $c$.
The difference in NSFR entropy change between SD-NSFR and FD-NSFR is on the order of $10^{-11}$ using the same spatial scheme.
Thus, the dissipation of numerical entropy is dominated by physical viscosity and any change due to the time step size is comparatively small. }
From this perspective, for low-Mach, low-Reynolds turbulence simulations using a large time step size, entropy change from a standard RK method is unlikely to corrupt results unless the simulation is run for a very long time interval.

\begin{figure}
    \centering
    \includegraphics[height = 2.5 in]{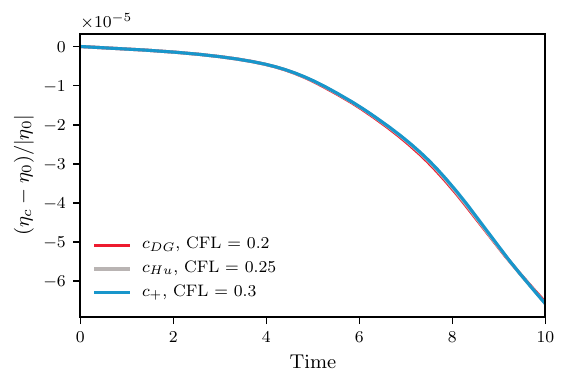}
    \caption{The evolution of NSFR numerical entropy in the viscous TGV test case for various FR correction parameters using the FD-NSFR scheme. Entropy change is plotted relative to the initial condition.}
    \label{fig:VTGV-num-entropy-Cparam}
\end{figure}

We next perform a time step refinement study to confirm convergence orders for the RRK method. 
Figure \ref{fig:VTGV-gamma-conv} shows the evolution of the relaxation parameter in the viscous TGV test case during a time refinement study. 
It is apparent that the relaxation parameter is quite noisy, especially at lower $t$. 
Especially during the laminar phase of the viscous TGV test case, there is only a small difference in the solution state between time steps, resulting in noise from subtractive cancellation errors.

\begin{figure}
    \centering
    \includegraphics[height = 2.5 in]{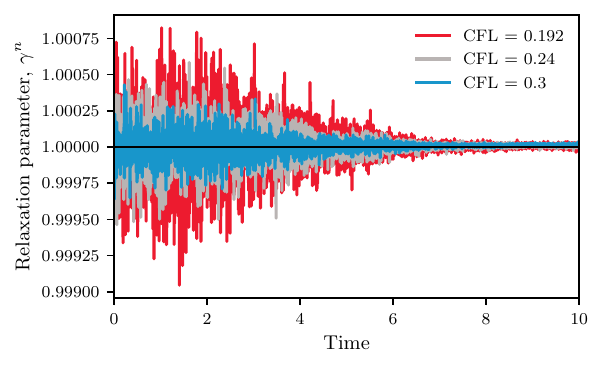}
    \caption{Evolution of the relaxation parameter during the viscous TGV test case.}
    \label{fig:VTGV-gamma-conv}
\end{figure}

We confirm that the relaxation parameter scales at $1+\mathcal{O}(\Delta t^{p-1})$ for the viscous TGV case by performing a temporal convergence study in Table \ref{tab:VTGV-gamma-conv} using a refinement ratio of $0.8$. The relaxation parameter is quite noisy even in the turbulent portion of the test case. Therefore, we report convergence of an average value of the relaxation parameter, after a steady-state value has been reached after $t=9$. the relaxation parameter is scaling as expected for both $c_{DG}$ and $c_+$, thus we are reassured that we maintain the desired temporal convergence order. 
{We note that the relaxation parameter reaches a steady-state value larger than $1.0$, indicating that the time step size is indeed being modified to preserve the expected entropy evolution. We conjecture that if the time step size is increased further -- for instance using implicit RK -- there will be a greater need for FD-NSFR.}
\begin{table}[h!]
    \centering
        \caption{Convergence of average $\gamma$ after $t=9$ using a refinement ratio of $0.8$. Convergence is evaluated toward $1$.}
        \begin{tabular}{|c|rr|}
        \hline
      CFL &     Average $\gamma$ &  Rate \\
      \hline
0.300 & 1.0000151 &           - \\
0.240 & 1.0000096 &   2.00 \\
0.192 & 1.0000062 &   1.98 \\
\hline
\end{tabular}
    \label{tab:VTGV-gamma-conv}
\end{table}

\section{Robustness of FD-NSFR} \label{sec:robustness}

In the preceding sections, we demonstrate that expected orders of convergence are maintained using FD-NSFR, and that the relaxation parameter does indeed modify the time step size to maintain the expected numerical entropy evolution. 
The benefit of FD-NSFR is clear in the inviscid TGV case, where numerical entropy dissipation is reduced from the order of $10^{-7}$ to machine precision. However, when moving to the viscous TGV case, viscous dissipation impacts the entropy evolution much more than temporal entropy change when using explicit RK. 
Therefore, we examine whether the cost of FD-NSFR is justified by the resulting improvement in solution quality for viscous cases.
We explore the failure modes of the SD-NSFR and FD-NSFR schemes using the TGV test case and the Kelvin-Helmholtz instability.
\subsection{Inviscid Taylor-Green vortex} \label{sec:inv-tgv}
We test whether RRK is capable of improving the robustness of FD-NSFR over SD-NSFR at a large time step size.
We find the largest stable time step for $c_{DG}$ to be governed by $CFL = 0.48$ for a simulation end time of $t_f=14$.
In this test, we extend the time step by gradually increasing the CFL number to $0.52$. While we expect that both schemes will fail at some point, Figure \ref{fig:TGV-unstable-CFL} demonstrates that the FD-NSFR scheme is able to extend the simulation end time.

When the CFL number is increased, the SD-NSFR scheme fails due to the formation of spurious oscillations within the domain. As shown in Figure~\ref{fig:TGV-unstable-failure-mode}, the relaxation parameter deviates from $1+\mathcal{O}(\Delta t^{p-1})$, a critical property essential for the proofs provided by Ketcheson and Ranocha \textit{et al.} ~\cite{ketcheson2019relaxation,ranocha2020relaxation}. This deviation can lead to unreliable temporal accuracy. Consequently, we conclude that the FD-NSFR method does not permit the use of larger time steps beyond the stability limit in the inviscid TGV test, though the extension of the end time suggests a robustness advantage.

\begin{figure}[h!]
    \centering
    \includegraphics[height = 2.5 in]{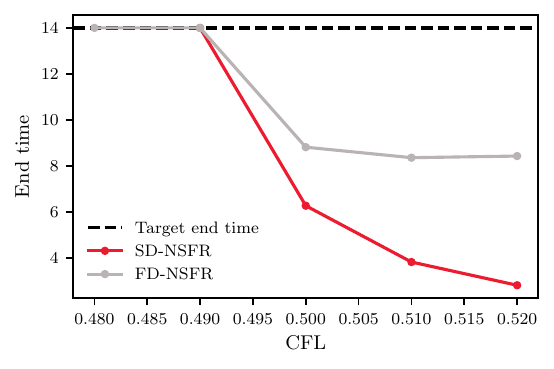}
    \caption{End time of inviscid TGV cases as the CFL number is increased. An the end time is less than $14$ indicates that the simulation crashed due to instability. Note when, $CFL=0.49$ both schemes had some instability forming near the end time, but neither crashed.}
    \label{fig:TGV-unstable-CFL}
\end{figure}

\begin{figure}[h!]
    \centering
    \includegraphics[height = 2.5 in]{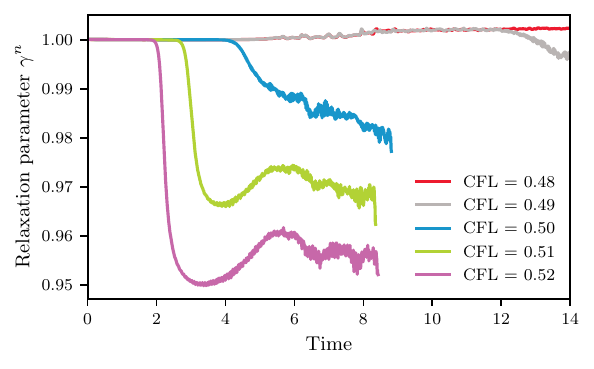}
    \caption{Relaxation parameter evolution for the inviscid TGV test case using FD-NSFR as the CFL number is increased past the stable size governed by CFL=$0.48$. }
    \label{fig:TGV-unstable-failure-mode}
\end{figure}

\subsection{Viscous Taylor-Green vortex}
We run a similar test using the viscous TGV case. In Section~\ref{sec:visc-tgv-results}, we use CFL$\ =0.3$ as a conservative estimate for a stable time step size. 
Here, we increase the CFL number from $0.3$ to $0.4$ by increments of $0.01$ for the $c_+$ correction parameter and from $0.2$ to $0.26$ for the $c_{DG}$ correction parameter.
The viscous dissipation term of the Navier-Stokes equations has a stabilizing effect on oscillations, so both FD-NSFR and SD-NSFR are able to reach the end time using any of the CFL numbers in the interval.
However, the SD-NSFR scheme fails to produce accurate results as the CFL number increases. Oscillations increase in magnitude in the right subplots of Fig. \ref{fig:VTGV-dissipation}.
On the other hand, FD-NSFR is able to produce nearly identical results for any CFL number tested. 
The same trends are reflected with $c_+$ in Fig. \ref{fig:VTGV-dissipation} (a) and $c_{DG}$ in Fig. \ref{fig:VTGV-dissipation} (b). 
Not only is FD-NSFR able to recover a high-quality dissipation rate, but Figure \ref{fig:VTGV-vel-contours} shows that FD-NSFR also reproduces a small-time-step solution better than SD-NSFR.
Applying a scheme with a fully-discrete entropy condition has prevented the solution from being corrupted by oscillations introduced by unphysical temporal entropy growth.

\begin{figure}[h!]
    \centering
    \begin{subfigure}[t]{\textwidth}
    \centering
    \includegraphics[height=2.5 in]{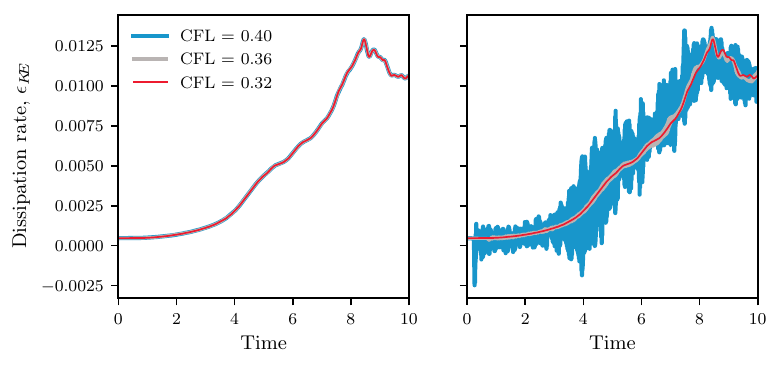}
    \caption{Dissipation rate using $c_+$.}
    \end{subfigure}
    \begin{subfigure}[t]{\textwidth}
    \centering
    \includegraphics[height=2.5 in]{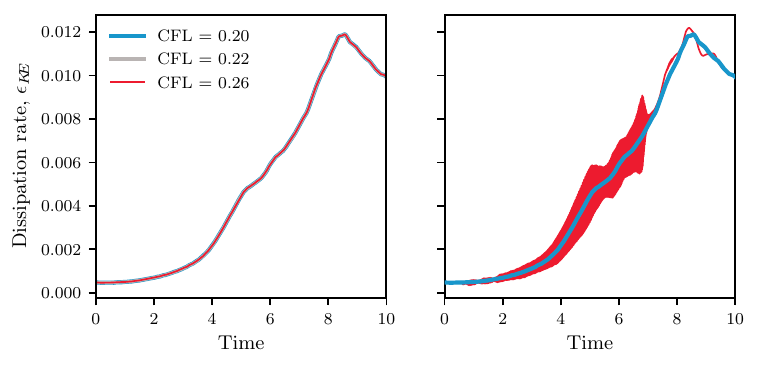}
    \caption{Dissipation rate using $c_{DG}$.}
    \end{subfigure}
    \caption{Kinetic energy-based dissipation rate as the CFL number is increased. The left figures use FD-NSFR and demonstrate that the same dissipation rate is recovered for any CFL number tested. In contrast, the right figures show that the SD-NSFR scheme yields an oscillatory solution for $CFL=0.36$ and $CFL=0.40$.}
    \label{fig:VTGV-dissipation}
\end{figure}

\begin{figure}
    \centering
        \begin{subfigure}[t]{0.48\textwidth}
    \includegraphics[width=\linewidth]{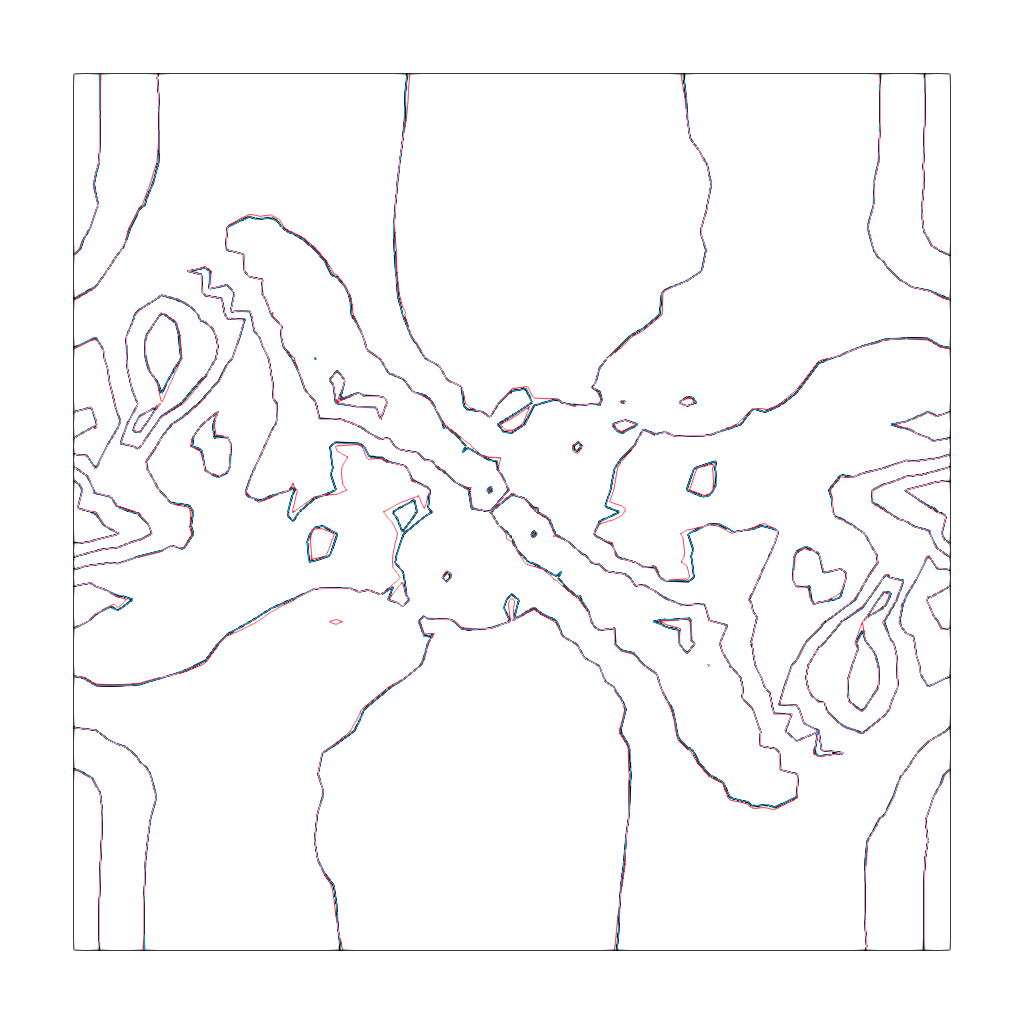}
    \caption{CFL=$0.35$}
    \end{subfigure}
    \begin{subfigure}[t]{0.48\textwidth}
        \includegraphics[width=\linewidth]{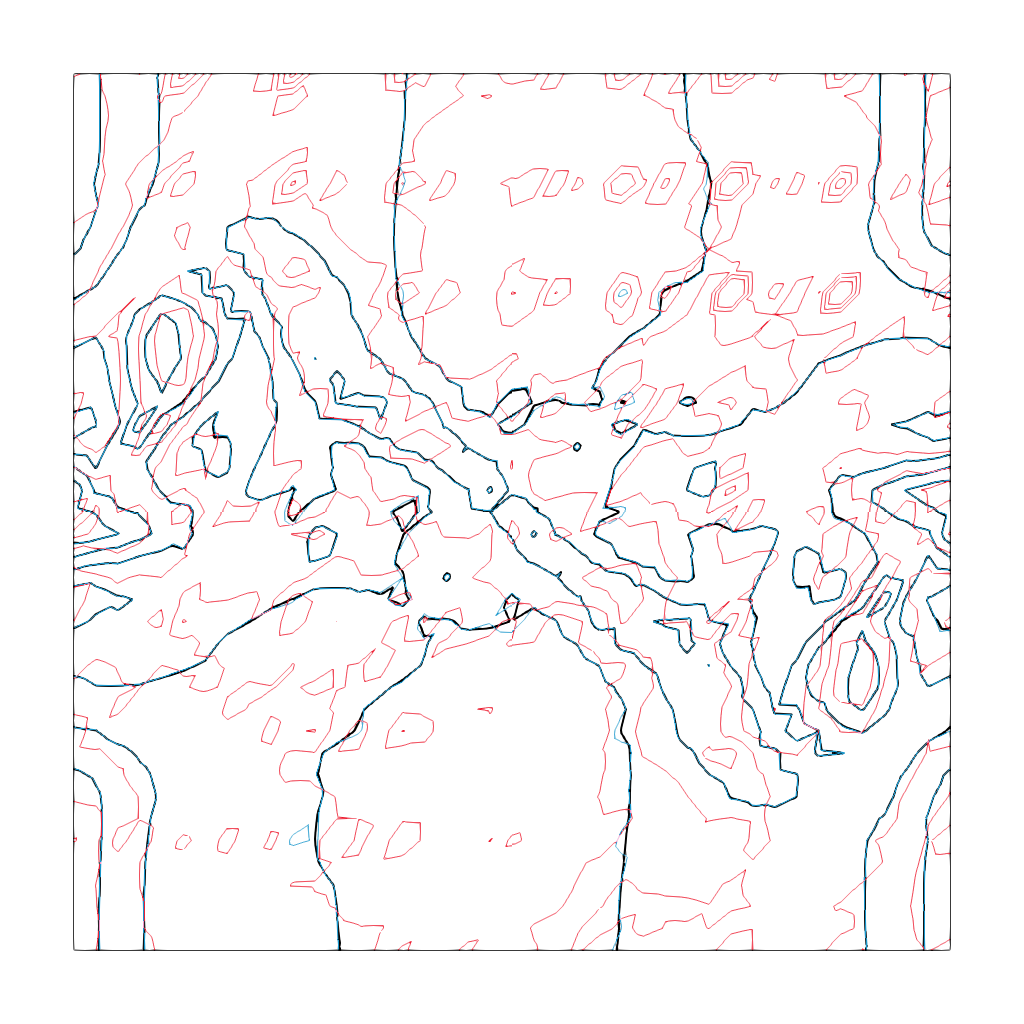}
        \caption{CFL=$0.40$}
    \end{subfigure}
    \caption{Contours of velocity magnitude for the viscous TGV test case using $c_+$. Red lines show SD-NSFR at the indicated CFL, blue lines show FD-NSFR at the indicated CFL, and black lines show SD-NSFR with a small CFL=$0.03$. At CFL$=0.35$, the blue and black lines are visually superimposed. Contours are at $\{0.2, 0.4, 0.6, 0.8\}$. We show a slice of the domain, $x \in [-\pi, 0], y = 0, z \in [-\pi, 0]$.}
    \label{fig:VTGV-vel-contours}
\end{figure}

We plot the relaxation parameter from the $c_+$ tests in Fig. \ref{fig:VTGV-gamma} to demonstrate that the relaxation parameter drops in order to reduce the time step size to a stable region. {When the time step is only slightly larger than the stable value, as seen in the $CFL=0.34$ and $CFL=0.35$ cases, the relaxation parameter successfully realigns with the trajectory of the smaller-CFL solutions, demonstrating that the solution is indeed recovered.}
The recovery mode observed in the viscous TGV test case was not seen in inviscid cases, so we hypothesize that viscosity is stabilizing the result sufficiently that the relaxation parameter is able to return to the vicinity of $1$.
We note that the temporal accuracy may degenerate due to the deviation from $1$; however, the similarity of the kinetic energy dissipation rate in Fig. \ref{fig:VTGV-dissipation} demonstrates that the overall solution accuracy is satisfactory.
The result is similar using the $c_+$ and $c_{DG}$ correction parameters, suggesting that correcting only the $L^2$ temporal entropy change is sufficient to achieve a good solution, despite the unaddressed temporal entropy change in the $\mathcal{K}$ norm.

\begin{figure}[h!]
    \centering
    \includegraphics[height = 2.5 in]{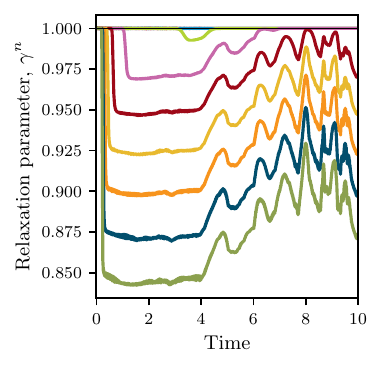}
    \includegraphics[height = 2.5 in]{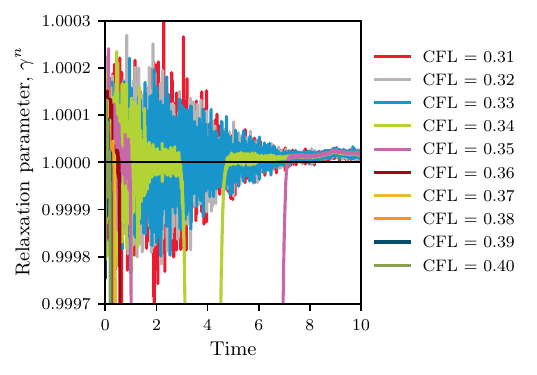}
    \caption{Relaxation parameter evolution using FD-NSFR as the CFL number is increased from $0.31$ to $0.40$ for the $c_+$ cases. The left figure zooms in to the vicinity of $1.0$ to demonstrate that the relaxation parameter is able to recover after deviating away from $1.0$.}
    \label{fig:VTGV-gamma}
\end{figure}

Figure \ref{fig:VTGV-time-step} demonstrates that the SD-NSFR and FD-NSFR cases result in a similar time step size magnitude as the CFL number increases.
 CFL adaptation is applied to both SD-NSFR and FD-NSFR. As presented in Algorithm \ref{alg:rrkForNavierStokes} for the FD-NSFR case, CFL adaptation adjusts the target time step size after applying the relaxation parameter $\gamma^n$.
In the case of SD-NSFR, time step size modifications as evident in Figure \ref{fig:VTGV-time-step}(right) resulted only from CFL adaptation. 
The precipitous drop observed between time 0.5 and 1 as the CFL is increased is due to the presence of high wavespeeds found within the domain, which are likely caused by spurious oscillations. The increasingly oscillatory solution is reflected in the contours of velocity magnitude, Fig.~\ref{fig:VTGV-vel-contours}.
On the other hand, the drop in time step size observed for FD-NSFR is only a result of RRK, where values of $\gamma^n < 1$ limit the temporal entropy change. Hence, even if the magnitude of the time step sizes is generally the same between the schemes, RRK has demonstrated the ability to yield a precise time step that ensures the solution tracks the expected trajectory as confirmed in Fig.~\ref{fig:VTGV-vel-contours}.

\begin{figure}[h!]
    \centering
    \includegraphics[height = 2.5 in]{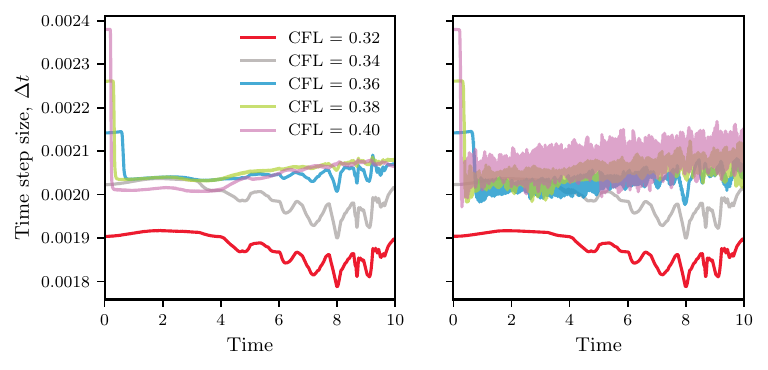}
    \caption{Time step size resulting from CFL adaptation and subsequent RRK modification in the FD-NSFR case (left) and from CFL adaptation in the SD-NSFR case (right) as the CFL number is changed for the $c_+$ case.}
    \label{fig:VTGV-time-step}
\end{figure}

\subsection{Kelvin-Helmholtz instability}
As a final test case, we assess the impact of the fully-discrete scheme on robustness in the presence of density gradients using the Kelvin-Helmholtz instability (KHI) test case as described by Chan \textit{et al.}~\cite{chan2022entropy}. The test case is defined for the Euler equations by the following initial condition on the periodic domain $\bm{x}\in[-1,1]$:

\begin{equation}
    \begin{aligned}
        \rho_1 &= 0.5 \\
        \rho_2 &= \rho_1 \frac{1+A}{1-A} \\
        B(x,y) &= \frac{1}{2} \left(  \tanh{(15y+7.5)} - \tanh{(15y-7.5)} \right) \\
        \rho(x,y) &= \rho_1 + B(x, y) * (\rho_2-\rho_1) \\
        v_1(x,y) &= B-\frac{1}{2} \\
        v_2(x,y) &= 0.1 \sin{(2\pi x)} \\
        p(x,y) &= 1.
    \end{aligned}
\end{equation}

As the KHI is a physically unstable case, it is expected to be challenging for numerical schemes. Furthermore, the presence of strong density gradients, which become stronger as vortical structures form, increases the risk of failure caused by negative density. To maintain consistency with~\cite{chan2022entropy}, we use Ranocha's modification to the Chandrashekar entropy-stable flux with Lax-Friedrichs dissipation to stabilize the results~\cite{ranocha2021preventing, chandrashekar2013kinetic}. We use RK4 with a very small time step size dictated by CFL$=0.01$. Reducing the time step size further does not change the results. {\color{black}We first provide a comparison of both a high- and low-$p$ solution at an equal number of degrees of freedom in Figure~\ref{fig:KHI-vis}, confirming that the high-$p$ version resolves finer flow structures. }

For a robustness test, we use a straight $16\times16$ grid with $p=7$, corresponding to~\cite[Figure 7(b)]{chan2022entropy}. We set a target end time $t=10$. If the simulation is able to reach the target end time, we deem it stable.
The schemes developed by Chan \textit{et al.}~\cite{chan2018discretely,chan2022entropy} and Cicchino \textit{et al.}~\cite{cicchino2022nonlinearly,cicchino2022provably,cicchino2023discretely} both use entropy-projected variables to evaluate fluxes, allowing uncollocated solution and flux nodes.
{\color{black}We provide a robustness comparison using two variations of node choices for the purpose of reproducing the findings of Chan \textit{et al.}~\cite{chan2022entropy} -- that is, selecting uncollocated solution and flux nodes prolong the simulation time.}
We present results for $c_{DG}$ {\color{black} on the left of Figure~\ref{fig:KHI-crash-times} and $c_{Hu}$ on the right of the same figure}. Figure~\ref{fig:KHI-crash-times} (left) nearly reproduces results in~\cite{chan2022entropy} when the same nodes are used. Discrepancies can be attributed to the use of a small time step rather than adaptive time-stepping and implementation differences. There is little difference between the two $c$ parameters, indicating that the choice of FR parameter does not have a large impact on the stability of this case. The correction parameter $c_+$ was omitted, as it is only listed to a maximum of $p=5$ by Castonguay~\cite{castonguay2012high}.

The results of Figure~\ref{fig:KHI-crash-times} show an imperceptible difference between FD-NSFR and SD-NSFR in the KHI case. We believe this is due to the very small time step employed in this case to prevent negative density. The robustness improvements seen in the TGV test cases were enabled by a large time size. 
{\color{black} When a large time step size was used with the KHI test case, FD-NSFR and SD-NSFR ended at similar times, indicating that the failure was not related to the temporal numerical entropy change.}
It is clear that fully-discrete entropy stability is not sufficient to mitigate negative density in the presence of strong density gradients.
    
\begin{figure}[h!]
    \centering    
    \begin{minipage}{0.5\textwidth}
        \centering
        \includegraphics[width=\textwidth]{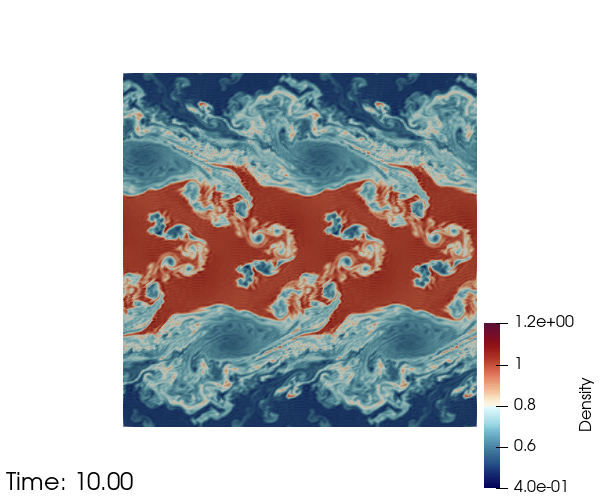} 
    \end{minipage}\hfill
        \begin{minipage}{0.5\textwidth}
        \centering
        \includegraphics[width=\textwidth]{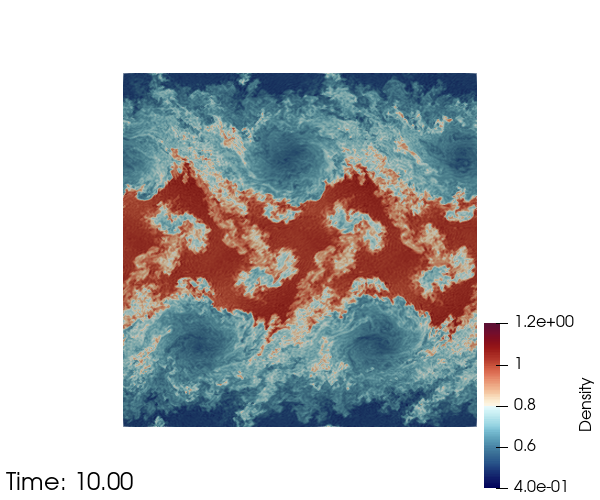} 
    \end{minipage}\hfill
    \caption{Two visualizations of the KHI test case at $A=1/3$ with an equal number of degrees of freedom. Left is $p=2, N = 128\times 128$, while the right is $p=11, N = 32\times32$. The higher-$p$ visualization shows more intricate flow structures.}
    \label{fig:KHI-vis}
\end{figure}

\begin{figure}[h!]
    \begin{minipage}{0.4\textwidth}
        \centering
        \includegraphics[height=2.5 in]{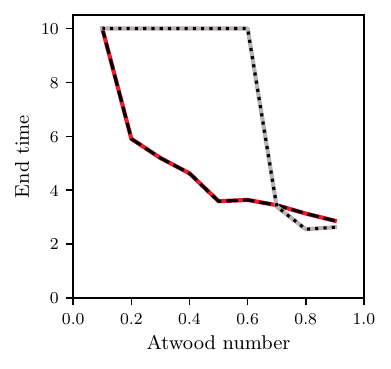} 
    \end{minipage}\hfill
        \begin{minipage}{0.6\textwidth}
        \centering
        \includegraphics[height=2.5 in]{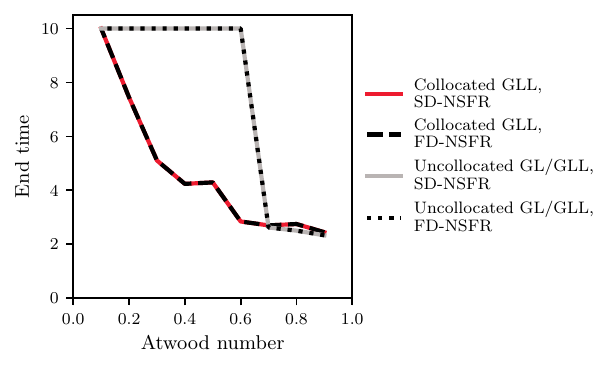} 
    \end{minipage}\hfill
    \caption{End times for the KHI simulation. An end time less than $10$ indicates that the simulation ended due to numerical instability. The left figure uses $c_{DG}$, and the right figure uses $c_{Hu}$.}
    \label{fig:KHI-crash-times}
\end{figure}

\section{Conclusions}
This work presents an FD-NSFR scheme and demonstrates its implementation using the inviscid and viscous Burgers' equations, the Euler equations, and the Navier-Stokes equations.
Crucially, we develop a method for implementing RRK in the broken Sobolev norm. 
For problems with inner-product numerical entropy, the method is fully-discretely entropy stable in the broken Sobolev $W^{k,2}_c$ norm. When numerical entropy is a general convex function, the temporal method prevents entropy change in the $L^2$ norm.
The FD-NSFR method enables us to benefit from the larger time step sizes allowed by FR schemes while still maintaining spatial and temporal entropy stability. 
We use a variety of inviscid and viscous test cases to show that numerical entropy evolves as expected while maintaining expected orders of convergence.
We demonstrate that FD-NSFR improves the robustness of low-Mach turbulence simulations at a large time step size. 
In iLES simulations of the viscous TGV test case, the FD-NSFR scheme enabled the solution to closely match a reference solution, even when using a higher CFL number than what would typically replicate the reference solution with the SD-NSFR scheme.
Our results suggest improved robustness when the exact stability limit is not known; however, this comes at the added expense of estimating the relaxation parameter. 
The FD-NSFR scheme does not prove beneficial in the KHI test case due to the small time step size needed to maintain positivity, motivating the addition of other robustness measures alongside entropy stability.
We found that FD-NSFR was the most advantageous when applied to problems using a very large time step size.
Applying RRK may be even more important to be used alongside implicit temporal integration to avoid temporal entropy accumulation which is on a comparable level to physical, viscous dissipation.

\section{Acknowledgements}
The authors are grateful to Alexander Cicchino for insights on the semidiscretization and to Julien Brillon for implementing the Taylor-Green vortex test case in \lstinline{PHiLiP}.
We acknowledge the support of the Natural Sciences and Engineering Research Council of Canada (NSERC) Discovery Grant Program [RGPIN-2019-04791] and Canadian Graduate Scholarships - Doctoral program [CGS-D-579552], and the support of McGill University. This research was enabled in part by support provided by Calcul Quebec and the Digital Research Alliance of Canada.

\bibliographystyle{elsarticle-num}
\bibliography{refs.bib}

\end{document}